\tikzstyle arrowstyle=[scale=1]  
\tikzstyle directed=[postaction={decorate,decoration={markings,
    mark=at position .65 with {\arrow[arrowstyle]{stealth}}}}]
\tikzstyle reverse directed=[postaction={decorate,decoration={markings,
    mark=at position .65 with {\arrowreversed[arrowstyle]{stealth};}}}]
\newcommand{\black}[1]{\color{black}}
        \definecolor{pink}{rgb}{1,0,1}
\newtheorem{theorem}{Theorem}
\newtheorem{prop}[theorem]{Proposition}
\newtheorem{lemma}[theorem]{Lemma}
\newtheorem{cor}[theorem]{Corollary}
\newtheorem{remark}[theorem]{Remark}
\theoremstyle{definition}
\newtheorem{defn}[theorem]{Definition}
\numberwithin{equation}{section}
\newcommand{\supp}{\operatorname{Supp}}
\newcommand{\pa}{\partial}
\newcommand{\Sings}{\operatorname{Sing Supp}}
\newcommand{\cL}{{\mathcal{L}}}
\newcommand{\cB}{{\mathcal{B}}}
\newcommand{\R}{\mathbb{R}}
\DeclareMathOperator{\Tr}{Tr}
\title{The Dirichlet isospectral problem for trapezoids} 
\author[Hamid Hezari]{Hamid Hezari}\address{Department of Mathematics, 510J Rowland Hall - University of California, Irvine, CA 92697-3875.} \email{hezari@uci.edu}
\author[Zhiqin Lu]{Zhiqin Lu}\address{Department of Mathematics, 410D Rowland Hall - University of California, Irvine, CA 92697-3875.} \email{zlu@uci.edu}
\author[Julie Rowlett]{Julie Rowlett} \address{Mathematics Department, Chalmers University and the University of Gothenburg, 41296, Gothenburg Sweden} \email{julie.rowlett@chalmers.se} 
\keywords{isospectral; trapezoid; inverse spectral problem, wave trace, heat trace, diffractive orbits, diffraction,  MSC primary 58C40, secondary 35P99.}
\begin{document}

\begin{abstract}
We show that non-obtuse trapezoids are uniquely determined by their Dirichlet Laplace spectrum. This extends our previous result \cite{hlrn}, which was only concerned with the Neumann Laplace spectrum.
\end{abstract}

\maketitle


\section{Introduction}
 M. Kac  popularized the isospectral problem for planar domains with a paper \cite{kac} titled ``Can one hear the shape of a drum?''   For a bounded, connected domain $\Omega$ in $\R^2$, we define $\Delta_\Omega^\mathcal{B}$ to be the Laplace operator on $\Omega$ with boundary condition $\mathcal{B}$, where $\mathcal{B}$ is either Dirichlet or Neumann.  We consider the Laplace eigenvalue equation, 
\begin{equation*} \label{lap} \Delta_\Omega ^\mathcal B u= - \frac{\pa^2 u}{\pa x^2} - \frac{\pa^2 u }{\pa y^2} = \lambda u, \quad \mathcal B(u) = 0 \; \textrm{ on the boundary of } \Omega. \end{equation*} 
The eigenvalues form a discrete subset of $[0, \infty)$,  $0\leq \lambda_1 < \lambda_2 \leq \lambda_3 \leq \cdots$.  If one takes the Dirichlet boundary condition, requiring the function $u$ to vanish at the boundary, then the set of eigenvalues, known as the spectrum of $\Omega$, is in bijection with the resonant frequencies a drum would produce if $\Omega$ were its drumhead.  With a perfect ear  one could hear all these frequencies and therefore know the spectrum.  Kac's question mathematically means:  \em if two such domains are isospectral, then are they isometric? \em  Gordon, Webb, and Wolpert answered Kac's question in the negative \cites{gww,gww1} (see also \cite{chapman} for an accessible presentation).  All the known counterexamples to date consist of non-covex polygons. 



On the other hand, in certain settings this isospectral question can have a positive answer. There are many types of positive results. One question is whether any domain is spectrally unique (up to rigid motions) among a very large class of domains. In this direction, Kac proved that disks can be heard among all domains. He used the heat trace invariants to prove that the area and perimeter of a domain are determined by its spectrum, so by the isoperimetric inequality, disks are spectrally determined. 
Watanabe \cite{wat2} proved that there are certain nearly circular oval domains that are spectrally unique. Recently, the first author and Zelditch \cite{hz2} showed that one can hear the shape of nearly circular ellipses among all smooth domains. A  weaker inverse spectral problem is to find domains that are locally spectrally unique, meaning that that they can be heard among nearby domains in a certain topology.  Marvizi and Melrose \cite{mm} constructed a two-parameter family of planar domains that are locally spectrally unique in the $C^\infty$ topology. The two parameter family consists of domains that are defined by elliptic integrals, and that resemble ellipses but are not ellipses.  For more on positive inverse spectral problems we refer the readers to the surveys \cites{dh, zelsurvey}. 

The notion of spectral rigidity of a domain $\Omega$ is even weaker than local spectral uniqueness. It means that any $1$-parameter family of isospectral domains containing $\Omega$ and staying within a limited class, must be trivial, i.e. made out of rigid motions. In this setting, Popov and Topalov \cite{pt} have recently shown that ellipses are spectrally rigid within the class of analytic domains with the two axial symmetries of an ellipse. In a recent article \cite{hz2}, using a length spectral rigidity theorem of de Simoi, Kaloshin, and Wei \cite{dkw}, the first author and Zeldtich have proven that nearly circular domains with one axis of symmetry are spectrally rigid among such domains.  The final setting is {infinitesimal spectral rigidity} of a given domain $\Omega$, which requires that the first variation of any non-trivial $1$-parameter family of isospectral domains containing $\Omega$ and staying within a class vanishes. It was proved in \cite{hz1} that ellipses are infinitesimally spectral rigid among smooth domains with the axial symmetries of an ellipses. 


Another interesting setting, into which our result fits, is when one tries to show that the Laplace spectrum map is one-to-one in a relatively small class of domains. The class of domains is either infinite dimensional, in which case normally a generic property is added to simplify an otherwise difficult problem, or it is finite dimensional where no genericity assumption is imposed.  In the former setting, Zelditch \cite{zel} proved that generic analytic domains with an axial symmetry are spectrally distinguishable from each other.  The few inverse problems to date that consider a finite dimensional class of planar domains are about polygonal domains.  The first result of this type is due to Durso \cite{durso}, who proved that the shape of a triangle can be heard among other triangles.   Using the spectral invariants obtained from the short time asymptotic expansion of the heat trace, any two triangles that  are isospectral must have the same area and perimeter.  Since triangles depend on 3 independent parameters, to obtain her result, Durso  used another spectral invariant, namely, the wave trace.  She demonstrated that the length of the shortest closed geodesic in a triangular domain is also a spectral invariant.  More recently Grieser and Maronna \cite{grma} realized that if one used an additional spectral invariant from the heat trace, then this together with the area and perimeter uniquely determine the triangle; that is a much simpler proof.




After triangles, it is natural to consider quadrilaterals.  For rectangles, it is straightforward to prove that if two rectangles are isospectral, then they are congruent.  In fact, one only requires the first two Dirichlet eigenvalues to prove this.  For parallelograms, it is also a straightforward argument using the first three heat trace invariants as in \cite{sos} to prove that isospectral parallelograms are congruent.  The next natural generalization is to trapezoids.  In this case, one can prove that the geometric information that can be extracted from the heat trace is insufficient to prove that isospectral trapezoids are congruent.  It is therefore necessary to use the wave trace in the spirit of \cite{durso}, which is a much more delicate matter.  The wave trace is a tempered distribution that is a spectral invariant.  To use the wave trace in an isospectral problem, one studies the times at which the wave trace is singular.  For smoothly bounded domains, \cite{gm79} showed that the set of positive times at which the wave trace is singular is contained in the set of lengths of closed geodesics; this is known as \em the Poisson relation.  \em  Once the boundary is no longer smooth, this relation is only known to hold in certain geometric settings.   Here we rely on the work of \cites{Wu, Hil1} to obtain the Poisson relation.  However, the Poisson relation is only a containment.  To be able to state that the length of a certain closed geodesic is a spectral invariant, one must study the singularity in the wave trace at time equal to that length.  Hence, exploiting this technique requires not only careful study of the wave trace but also detailed information on the closed geodesics in the domain.  

The study of closed geodesics in polygonal domains has quite a long history, that to the best of our knowledge was initiated by Fagnano in 1775. Fagnano proved that the orthic triangle (also called Fagnano triangle) is the shortest closed geodesic inside an acute triangle.  Two centuries passed before Schwartz demonstrated the existence of closed geodesics in certain obtuse triangular domains \cites{schwartz, schwartz2}.  We refer to the survey article of Gutkin \cite{g2} for what is known about the existence and distribution of closed geodesics in polygonal domains. 

Our main result is the following. 
\begin{theorem} \label{th:main}  Let $T_1, T_2 \subset \R^2$ be two non-obtuse  trapezoidal domains.  Then if the spectra of the Euclidean Laplacian with Dirichlet boundary condition on $T_1$ and $T_2$ coincide, the trapezoids are congruent, that is equivalent up to a rigid motion of the plane. 
\end{theorem}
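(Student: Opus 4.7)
The plan is to adapt the strategy of our Neumann treatment \cite{hlrn} to the Dirichlet setting. Parametrize a non-obtuse trapezoid $T$ by the two base angles $\alpha,\beta\in(0,\pi/2]$ at its longer parallel side, the height $h$, and the length $a$ of the shorter parallel side; the long side then has length $b=a+h(\cot\alpha+\cot\beta)$ and the slanted sides have lengths $h/\sin\alpha$ and $h/\sin\beta$. The first three heat-trace coefficients (insensitive to the choice of boundary condition up to a sign on the perimeter term) recover the area $A=(a+b)h/2$, the perimeter $P=a+b+h/\sin\alpha+h/\sin\beta$, and the corner sum $C(\alpha,\beta)=\sum_{j}(\pi^2-\theta_j^2)/(24\pi\theta_j)$ with $\theta_j\in\{\alpha,\beta,\pi-\alpha,\pi-\beta\}$. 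These give three equations in the four unknowns $(a,h,\alpha,\beta)$, so a fourth spectral invariant is required.

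For that fourth invariant I would turn to the Dirichlet wave trace. The vertical segments of length $h$ joining the two parallel sides form a one-parameter family of perpendicular periodic orbits of common length $2h$. Using the Poisson relation for polygonal domains \cites{Wu,Hil1}, I would argue that $2h$ is the smallest positive singular time of the wave trace by verifying that (i) no shorter periodic reflective orbit exists, since the only pair of parallel walls is the two parallel sides and closed orbits bouncing between the slanted sides exist only when $\pi-\alpha-\beta=\pi/n$, a discrete set that can be excluded using the heat invariants, and (ii) no shorter diffractive corner-to-corner orbit exists, as the shortest candidates are the sides themselves, whose lengths can be controlled in regimes consistent with the heat data, with exceptional configurations handled case-by-case. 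The bundle of perpendicular orbits then produces a principal wave-trace singularity whose amplitude, computed via the formulas of \cite{Hil1}, is nonzero for Dirichlet because the two reflection signs multiply to $+1$; this pins down $h$ as a spectral invariant.

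With $h$ known, the area gives $a+b$, the perimeter yields $S(\alpha,\beta):=\csc\alpha+\csc\beta=(P-(a+b))/h$, and the corner sum gives $C(\alpha,\beta)$ directly. As in \cite{hlrn}, a direct monotonicity calculation shows that the map $\{\alpha,\beta\}\mapsto (S(\alpha,\beta),C(\alpha,\beta))$ is injective on unordered non-obtuse pairs; this step is purely algebraic and independent of the boundary condition, so the Neumann argument transfers. It determines $\{\alpha,\beta\}$, hence $b-a=h(\cot\alpha+\cot\beta)$, and hence $a$ and $b$ individually. The only remaining ambiguity is the swap $\alpha\leftrightarrow\beta$, which realizes $T$ up to reflection across the perpendicular bisector of its parallel sides, a rigid motion; this completes the proof.

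The hardest Dirichlet-specific step, and the place where the argument genuinely diverges from \cite{hlrn}, is the non-vanishing and non-cancellation of the wave-trace singularity at $t=2h$. Dirichlet reflection changes the sign of the reflected wave, and the Dirichlet corner diffraction coefficients carry the opposite sign of the Neumann ones, so \emph{a priori} a diffractive contribution could exactly cancel the contribution of the perpendicular bundle. Ruling this out requires combining the Cheeger--Taylor diffraction calculus with the propagation-of-singularities framework of \cites{Wu,Hil1}, together with a careful enumeration of the finitely many exceptional angle configurations -- cut down using the heat-trace equations -- in which a competing reflective or diffractive orbit could reach length exactly $2h$. Executing this non-cancellation analysis is the principal technical burden of the argument.
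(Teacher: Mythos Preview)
Your proposal has a genuine gap at the step where you claim that $2h$ is the smallest positive singular time of the Dirichlet wave trace. This is false on an open set of trapezoids, and not merely in ``exceptional configurations.'' Two families of shorter orbits can occur. First, the purely diffractive orbit that bounces back and forth along the short parallel side (the paper's $2b$ orbit, length twice your $a$) satisfies $2a<2h$ whenever the trapezoid is tall and narrow at the top; this is a full-dimensional region of moduli space and cannot be ``controlled by the heat data.'' Second, your enumeration of reflective orbits in (i) is incomplete: orbits need not bounce only between parallel walls or only between the two legs. In particular, when the extended triangle is acute and the trapezoid is tall enough, the Fagnano (orthic) triangle sits inside $T$ as a non-conical period-three orbit of length $\ell_F=2B\sin\alpha\sin\beta$, and one can have $\ell_F<2h$. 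So the wave trace can, and often does, have singularities strictly before $t=2h$, and your scheme of reading off $h$ as ``the first singular time'' breaks down.

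The paper circumvents this not by arguing that $2h$ comes first, but by exploiting the \emph{order} of each wave-trace singularity as an additional spectral label. The $2h$ family contributes at order $\tfrac12$; the $2mb$ diffractive orbits contribute at order at most $-m/2$; the Fagnano orbit and other isolated odd-period non-conical orbits contribute at order $0$; the altitude $2h_\alpha$ contributes at order $-\tfrac12$ generically (and $\tfrac12$ in the isosceles case). One then scans the singular support from the left, reading off the order at each singular time: either one eventually sees an order-$\tfrac12$ singularity and identifies $h$ (after which your heat-trace algebra finishes the proof), or one first matches $\ell_F$ and then, in the window $(\ell_F,2\ell_F)$, matches either $2h$ or $2h_\alpha$ by its order. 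The residual ambiguity --- an order-$\tfrac12$ singularity that is $2h$ for one trapezoid but $2h_{\alpha}$ for an isosceles partner --- is killed by the angle invariant $q$. This multi-step bookkeeping, together with the lemma that the only conical orbits of length $\le \min(2h,2h_\alpha)$ are the obvious ones, is precisely what replaces the Neumann-side computation of the $2b$ leading term and is missing from your outline.
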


In \cite{hlrn}, we proved this result when Neumann boundary condition is considered. One of the key elements of our proof was the use of the singularity in the wave trace that is produced by the orbit $2b$, the orbit that bounces between the the top two vertices of the trapezoid; see Figure \ref{b}. In the Neumann case, we were able to compute and use the leading term of the singularity expansion of the wave trace at $t=2b$.  In the Dirichlet case this singularity has a lower order, and the computation of its leading term is much more complicated. In this paper we avoid doing such computations and accomplish the Dirichlet case by carefully studying other periodic orbits. In fact our proof includes the Neumann case as well and is not special to the Dirichlet case. Our proof still uses several key results from \cite{hlrn} on the wave trace singularity expansions associated to certain diffractive orbits.

\subsection{Organization of the paper} 
In Section \ref{s:heattrace} we present the heat trace invariants and their use and limitations in the determination of a trapezoid. Section \ref{s:wavetrace} introduces the wave trace and the Poisson relation for polygons. We also define the order of a wave trace singularity. In Section \ref{s: closed geodesics}, we specialize to the case of a trapezoid and study the important periodic orbits that we need for our argument, together with a thorough analysis of their singularity contribution. The proof of our main theorem is given in Section \ref{s:proof}.

\section{Heat trace invariants of trapezoids} \label{s:heattrace} 
In this section, we present a small collection of geometric spectral invariants that can be obtained through the asymptotic behavior of the heat trace as $t \downarrow 0$.  For the sake of completeness and to set the notation, we define the parameters of a trapezoid.   

\begin{defn}
A \em trapezoid \em is a convex quadrilateral that has two parallel sides of lengths $b$ and $B$ with $B\geq b$. The side of length $B$ is called the \em base. \em The two angles $\alpha$ and $\beta$ adjacent to the base are called base angles.  A trapezoid is called non-obtuse if the base angles satisfy
\[
0<\beta\leq\alpha\leq \frac\pi 2.  
\]
We make this assumption throughout the paper.  If $\alpha=\beta$, then we say the trapezoid is \em isosceles \em. The other two sides of the trapezoid are  known as \em legs \em of lengths $\ell$ and $\ell'$, respectively.  The distance between two parallel sides is called the \em height. \em See Figure~\ref{trap} for a picture of a trapezoid.  
\end{defn}

\begin{figure}[H]
	\begin{tikzpicture}
	\draw[thick] (0,0) -- (4,0)  -- (2.5,1.5)  -- (.5,1.5) -- (0,0) ; 
	\node at (2,0) [align=center, below]{$B$};
	\node at (1.5,1.5) [align=center, above]{$b$};
	\node at (.05, .8) [align=center]{$\ell$};
	\node at (3.55, .9) [align=center]{$\ell'$};
	\node at (.25, .15) [align=center]{$\alpha$};
	\node at (3.5, .15) [align=center]{$\beta$};
	\draw[red] (1.5,0)--(1.5,1.5);
	\node at (1.7, .8) [align=center]{$h$};

	\end{tikzpicture} 
	\centering
	\caption{A non-obtuse trapezoid and its parameters.} 
	\label{trap} 
\end{figure}
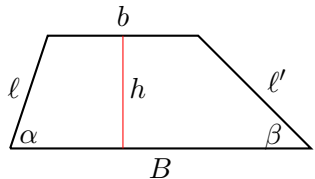%

We consider a trapezoid $T$ as in Figure~\ref{trap}. We use $b$, $B$, $\ell$, and $\ell'$ to denote the lengths of the shorter and longer parallel sides, respectively, and the lengths of the two legs of the trapezoid. Abusing notation, we may also use these to denote the corresponding edges. 

\begin{defn} Any quantity that is uniquely determined by the spectrum is known as a \em spectral invariant.  \em  Colloquially and in the spirit of \cite{kac}, we say that ``$X$ can be heard,'' if the quantity $X$ is a spectral invariant.  
\end{defn} 

 \subsection{The heat trace} 
Let $\{ \lambda_k \}_{k \geq 1}$ denote the eigenvalues. We define 
\[ \Tr e^{-t\Delta^{\mathcal B}_\Omega } = \sum_{k \geq 1} e^{-t \lambda_k } \] 
to be  the trace of the heat kernel, which is the Schwartz kernel of the fundamental solution to the heat equation.  The heat trace, which is a spectral invariant, is an analytic function for $\Re(t) > 0$.  It is well known in this setting (see \cites{kac,ms,vdb}) that the heat trace on a polygonal domain $\Omega$ admits an asymptotic expansion as $t \downarrow 0$, 
\begin{equation}\label{1}
\Tr e^{-t\Delta^{\mathcal B}_\Omega } = \frac{|\Omega|}{4 \pi t} \pm \frac{| \pa \Omega|}{8 \sqrt{\pi t}} + \sum_{k=1} ^n \frac{\pi^2 - \theta_k ^2}{24 \pi \theta_k} + O(e^{-c/t}), \quad t \downarrow 0.
\end{equation}
Above, $|\Omega|$ and $|\pa \Omega|$ denote respectively the area and perimeter of the domain $\Omega$, and $\theta_k$ are the interior angles. In the second term we choose the minus sign when $\mathcal B = \text{Diriclet}$ and the plus sign when  $\mathcal B = \text{Neumann}$.   The constant $c>0$ has been estimated in \cite{vdb}.  Since the angles of a trapezoid are $\alpha$, $\pi - \alpha$, $\beta$, and $\pi - \beta$, we therefore have the following.  

\begin{prop}  \label{qprop} For a trapezoidal domain, the area $A$, perimeter $L$, and the angle invariant $q$ defined by
\begin{equation} \label{dqfa} 
q:=F(\alpha) + F(\beta), \quad F(x) := \frac{1}{x (\pi - x)}.
\end{equation}
are spectral invariants.  
\end{prop}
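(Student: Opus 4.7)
The plan is to read the three claims directly off the short-time asymptotic expansion \eqref{1} of the heat trace. Because the heat trace $\Tr e^{-t\Delta^{\mathcal B}_\Omega}$ is a spectral invariant for each $t>0$, the coefficients in its $t\downarrow 0$ asymptotic expansion are spectral invariants as well. In particular, comparing the $t^{-1}$, $t^{-1/2}$, and $t^0$ coefficients shows immediately that the area $|T|$ and the perimeter $|\partial T|$ of a trapezoid are heard, and that the angle sum
\[
S \;:=\; \sum_{k=1}^{4} \frac{\pi^2 - \theta_k^2}{24\pi \theta_k}
\]
is also heard. It remains to show that $S$ determines $q=F(\alpha)+F(\beta)$.

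For this I would simply plug in the four interior angles of the trapezoid, which are $\alpha,\beta,\pi-\alpha,\pi-\beta$. Writing each summand as
\[
\frac{\pi^2-\theta^2}{24\pi\theta} \;=\; \frac{\pi}{24\,\theta} - \frac{\theta}{24\pi},
\]
the sum of the four $\theta/(24\pi)$ pieces collapses to a constant, because $\alpha+\beta+(\pi-\alpha)+(\pi-\beta)=2\pi$, contributing a universal term $-1/12$. The sum of the four $\pi/(24\theta)$ pieces pairs up naturally as
\[
\frac{\pi}{24}\Bigl(\tfrac{1}{\alpha}+\tfrac{1}{\pi-\alpha}\Bigr) + \frac{\pi}{24}\Bigl(\tfrac{1}{\beta}+\tfrac{1}{\pi-\beta}\Bigr).
\]
Using $\tfrac{1}{\theta}+\tfrac{1}{\pi-\theta}=\pi F(\theta)$, this is exactly $\tfrac{\pi^2}{24}(F(\alpha)+F(\beta))$. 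Combining, one obtains a clean identity of the shape $S=\tfrac{\pi^2}{24}\,q - \tfrac{1}{12}$, which inverts to express $q$ as a linear function of the spectral invariant $S$. This shows $q$ is heard.

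There is no real obstacle here; the whole content of the proposition is bookkeeping around \eqref{1}. The only point that demands any attention is making sure the $\pm$ sign in the perimeter coefficient is fixed by the boundary condition (so that isospectral trapezoids in the \emph{same} boundary condition class have the same $L$), and that the constant term is the same for Dirichlet and Neumann, so the angle identity applies uniformly. After that, the three invariants $A$, $L$, $q$ drop out of the three leading coefficients of \eqref{1}, completing the proof.
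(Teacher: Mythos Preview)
Your proposal is correct and is exactly the approach the paper takes: the proposition is stated immediately after the heat trace expansion \eqref{1} with the remark that the angles of a trapezoid are $\alpha,\pi-\alpha,\beta,\pi-\beta$, and no further proof is given. Your explicit computation $S=\tfrac{\pi^2}{24}q-\tfrac{1}{12}$ simply fills in the bookkeeping the paper leaves to the reader.
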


\begin{prop} \label{p:rect} The spectrum determines whether or not a trapezoid is a rectangle.  If one trapezoid is a rectangle and is isospectral to another trapezoid, then that trapezoid is also a rectangle, and the two rectangles are congruent.  
\end{prop}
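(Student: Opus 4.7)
The plan is to exploit the three heat trace invariants of Proposition \ref{qprop}---area $A$, perimeter $L$, and the angle invariant $q = F(\alpha)+F(\beta)$ with $F(x)=1/(x(\pi-x))$---to first detect the rectangle property and then pin down the dimensions.

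First I would study the function $F$ on $(0,\pi/2]$. Since $g(x) = x(\pi-x) = \pi x - x^2$ has derivative $\pi - 2x$, it attains its maximum on $(0,\pi)$ uniquely at $x=\pi/2$, with value $\pi^2/4$. Consequently $F(x) = 1/g(x)$ attains its minimum on $(0,\pi/2]$ uniquely at $x=\pi/2$, with $F(\pi/2)=4/\pi^2$. Under the standing assumption $0 < \beta \le \alpha \le \pi/2$ for a non-obtuse trapezoid, this yields
\[
q = F(\alpha) + F(\beta) \;\ge\; \tfrac{8}{\pi^2},
\]
with equality if and only if $\alpha = \beta = \pi/2$, which is exactly the condition that the trapezoid be a rectangle. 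Since $q$ is a spectral invariant, the equality $q = 8/\pi^2$ is audible, and hence so is the property of being a rectangle. In particular, if a trapezoid $T_1$ is a rectangle and $T_2$ is isospectral to it, then $q(T_2) = q(T_1) = 8/\pi^2$, forcing $T_2$ to be a rectangle as well.

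It then remains to show that two isospectral rectangles are congruent. Let $T_1$ have side lengths $a_1, b_1$ and $T_2$ have side lengths $a_2, b_2$. The spectral invariance of area and perimeter supplies
\[
a_1 b_1 = a_2 b_2, \qquad a_1 + b_1 = a_2 + b_2,
\]
so $\{a_1,b_1\}$ and $\{a_2,b_2\}$ are the two (unordered) roots of one and the same monic quadratic $t^2 - (a_1+b_1)t + a_1b_1 = 0$. Hence $\{a_1,b_1\} = \{a_2,b_2\}$ as multisets, and the two rectangles are congruent.

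There is no real obstacle in this proposition: the strict convexity-type fact that $F$ is uniquely minimized at $\pi/2$ does all the work, and the area/perimeter pair handles the rectangle-to-rectangle rigidity via an elementary symmetric-function argument. The only subtlety worth double-checking is the monotonicity range of $F$ so that the hypothesis $\alpha,\beta \le \pi/2$ is genuinely used to exclude competing angle configurations; the non-obtuse hypothesis is in fact indispensable here, since without it one could have $F(\alpha)+F(\beta) = 8/\pi^2$ with $\alpha \ne \pi/2$ being impossible only because $F$ takes values $\ge 4/\pi^2$ on the full interval $(0,\pi)$---so actually the conclusion still holds without the non-obtuse restriction, which is a nice bonus.
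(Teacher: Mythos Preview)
Your proof is correct. For detecting the rectangle property, your direct minimization of $F(x)=1/(x(\pi-x))$ at $x=\pi/2$ is equivalent to the paper's argument, which rewrites $q=\frac{1}{\pi}\bigl(\frac{1}{\alpha}+\frac{1}{\pi-\alpha}+\frac{1}{\beta}+\frac{1}{\pi-\beta}\bigr)$ and applies the AM--HM inequality to the four angles $\alpha,\pi-\alpha,\beta,\pi-\beta$ summing to $2\pi$; these are two phrasings of the same inequality with the same equality case. For the congruence of two isospectral rectangles, however, you take a genuinely different route: the paper invokes the first two Dirichlet eigenvalues (which, via the explicit rectangle spectrum $\pi^2(m^2/a^2+n^2/b^2)$, recover $a$ and $b$), whereas you stay entirely within the heat trace and use the area--perimeter pair to recover $\{a,b\}$ by Vieta. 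Your version has the small advantage of being boundary-condition agnostic and of not relying on the explicit computation of eigenvalues; the paper's version has the advantage of needing only two eigenvalues rather than an asymptotic invariant of the whole spectrum. Your closing remark that the non-obtuse hypothesis is inessential here (since $F$ is minimized on all of $(0,\pi)$ uniquely at $\pi/2$) is also correct.
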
 
\begin{proof} 
	Note that if we rewrite the angle invariant as
	\[ q = \frac{1}{\pi} \left ( \frac{1}{\alpha} +  \frac{1}{\pi - \alpha}  + \frac{1}{\beta}   + \frac{1}{\pi -\beta} \right ),\]  then by the `arithmetic mean-harmonic mean' inequality, we have
	\begin{equation*}\label{iso-17}
	q \geq \frac{8}{\pi^2},
	\end{equation*}
	and equality holds if and only if the trapezoid is actually a rectangle. 
If two trapezoids are isospectral, then they have the same value of $q$.  Hence they are either both rectangles or neither is a rectangle.  If they are both rectangles, and they are isospectral, then the dimensions of the rectangles can be obtained by the first two eigenvalues.  These uniquely determine the rectangle up to congruency, that is up to rigid motions of the plane. 
\end{proof}

Since the moduli space of trapezoids is four dimensional, the three heat trace invariants introduced above cannot determine the shape of the trapezoids.  To extract more information from the spectrum we turn to the wave trace.


\section{Poisson relation and singularities of the wave trace for polygons} \label{s:wavetrace}
In this section, we study the singularities of the wave trace of the Laplacian on a polygon.  The wave trace is the trace of the wave propagator, also known as the trace of 
the wave group, and can be written as  
\[ w_\Omega ^\cB (t):=\Tr e^{i t\sqrt{\Delta_\Omega ^{\mathcal B}} } = \sum_{k \geq 1} e^{it
\sqrt{\lambda_k} }. \] 
The wave trace is only well-defined when paired with a Schwartz class test function; it is a tempered distribution by Weyl's law. The connection between the wave trace and geodesic trajectories comes from the fact that the singularities of the wave operator propagate along geodesic trajectories.  For smoothly bounded domains, the times at which the wave trace is singular is contained in the set of lengths of generalized broken periodic geodesics \cite{PeSt}*{Theorem 5.4.6}; see also ~\cites{am,gm79}. 
Propagation of singularities of the wave operator in a polygonal domain is more difficult to study because of diffraction phenomena that may occur at the vertices. 

One way to study the wave trace on a polygonal domain is to double it and create an associated \em euclidean surface with conical singularities, \em or ESCS as in \cite{hil}. An ESCS is a compact manifold with finitely many conical singularities that is locally flat away from the conical points,  and near the conical points, it is isometric to a neighborhood of the vertex of a euclidean cone.  It was shown separately by Hillairet \cite{Hil1} and Wunsch \cite{Wu} that the positive singular support of the wave trace for the Friedrichs extension of the Laplacian on an ESCS is contained in the set of lengths of periodic geodesics on the ESCS.  On an ESCS, the conical points are separated into two groups.  A conical point on an ESCS is \em non-diffractive \em if its angle is equal to $\frac{2 \pi}{n}$ for some positive integer, $n$, otherwise it is called \em diffractive.  \em  

\begin{defn} A closed geodesic in a polygonal domain is a geodesic trajectory that forms a closed, piecewise linear curve that bounces off the edges according to the equal angle law.  We say that a closed geodesic is conical if it meets at least one vertex.  
When a geodesic trajectory meets a vertex,  if the interior angle at a vertex is of the form $\frac \pi N$ for positive integer $N>1$, then noting that the upper half space is an $N$-fold covering, this specifies the angle at which the trajectory leaves that vertex.  However, when the interior angle at a corner is not of the form $\frac \pi N$ for any positive integer $N>1$, Keller's democratic law of diffraction states that a billiard trajectory that hits the corner departs that corner in every direction \cite{keller}.  Such corners are called \em diffractive.  \em  The other corners are \em non-diffractive.  \em  Similarly, geodesics are classified as diffractive if they meet at least one diffractive corner, otherwise they are non-diffractive. 
 \end{defn} 

By our definitions,  all non-conical closed geodesics are non-diffractive geodesics.  However, because non-diffractive geodesics may pass through vertices with angles of the form $\frac \pi N$, not all non-diffractive closed geodesics are non-conical.

For a polygonal domain $\Omega \subset \R^2$, define 
\[\cL{sp} (\Omega) := \left \{ \textrm{lengths of closed diffractive or non-diffractive geodesics in $\Omega$} \right\}.\] 
Let 
\[ \Sings w_\Omega ^\mathcal B(t) := \textrm{ the singular support of } w_\Omega ^\mathcal B(t). \]

The utility of the length spectrum follows from the Poisson relation.
\begin{theorem}[Poisson relation for polygons \cites{Hil1, durso, Wu}] For a polygonal domain $\Omega$, we have 
\[ \Sings w_\Omega ^\cB (t) \subset  \{0\}  \cup { \pm \cL{sp} (\Omega)}. \] 
This holds for both the Dirichlet and Neumann boundary condition.  
\end{theorem}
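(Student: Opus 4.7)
The plan is to reduce the statement to the corresponding result on a closed Euclidean surface with conical singularities (ESCS), where the work of Hillairet \cite{Hil1} and Wunsch \cite{Wu} directly applies. The doubling construction is the standard device: take two isometric copies $\Omega_+$ and $\Omega_-$ of $\Omega$ and glue them along their boundaries via the identity map on $\partial \Omega$. The result is a compact ESCS $X$ whose conical points are exactly the vertices of $\Omega$, with cone angle $2\theta_k$ at the vertex of angle $\theta_k$. The gluing carries an involution $\sigma: X \to X$ that swaps the two copies and fixes $\partial\Omega$, and away from the vertices the glued surface is a smooth flat Riemannian manifold.

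The next step is to identify the wave trace on $\Omega$ with a piece of the wave trace on $X$. Given a Dirichlet eigenfunction $u$ on $\Omega$ with eigenvalue $\lambda$, its odd extension $\tilde u = u$ on $\Omega_+$ and $\tilde u = -u \circ \sigma$ on $\Omega_-$ is an eigenfunction of the Friedrichs Laplacian on $X$ with the same eigenvalue; the Neumann case uses the even extension. This produces a decomposition of the spectrum of $\Delta_X$ into a $\sigma$-even part (Neumann spectrum of $\Omega$) and a $\sigma$-odd part (Dirichlet spectrum of $\Omega$). Consequently
\[
w_\Omega^{\mathcal B}(t) = \tfrac12\bigl(\Tr e^{it\sqrt{\Delta_X}} \pm \Tr\bigl(\sigma^* e^{it\sqrt{\Delta_X}}\bigr)\bigr),
\]
so the singular support of $w_\Omega^{\mathcal B}$ is contained in the union of the singular supports of these two traces on $X$. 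The Hillairet--Wunsch theorem gives that the singular support of $\Tr e^{it\sqrt{\Delta_X}}$ is contained in $\{0\} \cup \pm\mathcal{L}(X)$, where $\mathcal{L}(X)$ is the set of lengths of closed (possibly diffractive) geodesics on the ESCS; the twisted trace is handled analogously by viewing $\sigma$ as a smooth isometry of $X$ away from the vertices, so the associated singularities occur at lengths of $\sigma$-periodic geodesics, i.e. geodesics closing up after composition with $\sigma$.

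To finish, I would show that every length appearing as a singularity of either trace corresponds to an element of $\mathcal{L}_{sp}(\Omega)$. A geodesic on $X$ that does not pass through the gluing locus $\partial\Omega$ lives entirely in $\Omega_+$ or $\Omega_-$, and since $X$ is flat there it must be a straight segment closing up — only possible if it passes through a conical point, so it corresponds to a conical (hence at worst diffractive) closed geodesic in $\Omega$. A geodesic that crosses $\partial\Omega$ transversely unfolds to a billiard trajectory in $\Omega$ that reflects off the boundary by the equal angle law; closing up on $X$ (for the untwisted trace) corresponds to closing up in $\Omega$ after an even number of reflections, while closing up for the $\sigma$-twisted trace corresponds to closing up after an odd number of reflections, and in both cases one obtains a closed diffractive or non-diffractive geodesic in $\Omega$ of the same length. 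Passage through a cone point on $X$ with cone angle $2\theta_k$ unfolds to a passage through the vertex of $\Omega$, diffractive precisely when $\theta_k$ is not of the form $\pi/N$.

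The main obstacle, which is precisely what Hillairet and Wunsch handle, is the analysis of how singularities propagate through conical points: a ray hitting a cone point generically spawns a diffracted spherical wavefront, and one must verify that the trace of the corresponding diffracted propagator is smooth except at times equal to the lengths of geometric (possibly broken/diffractive) closed orbits. Once that input is taken as a black box, the remaining work is the careful bookkeeping between $X$-geodesics and $\Omega$-orbits and the equally careful identification of the $\sigma$-twisted trace with the odd/even decomposition of the boundary spectra, both of which are conceptually routine but must be checked at each type of vertex and each parity.
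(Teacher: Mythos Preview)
The paper does not give its own proof of this theorem: it is stated with attribution to \cite{Hil1}, \cite{durso}, and \cite{Wu} and used as a black box. Your doubling approach is exactly the mechanism the paper alludes to in the paragraph preceding the theorem (``One way to study the wave trace on a polygonal domain is to double it and create an associated euclidean surface with conical singularities''), so there is nothing to compare at the level of argument.

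That said, one point in your sketch deserves a flag. The Hillairet--Wunsch results you cite concern the ordinary trace $\Tr e^{it\sqrt{\Delta_X}}$ on an ESCS, not the $\sigma$-twisted trace $\Tr(\sigma^* e^{it\sqrt{\Delta_X}})$. Your claim that the twisted trace ``is handled analogously'' is morally correct---the relevant FIO and diffractive parametrix constructions are $\sigma$-equivariant since $\sigma$ is an isometry fixing the cone points---but it is not literally a corollary of the cited statements and would require either re-running the microlocal argument equivariantly or invoking Durso's direct treatment of the polygon case. If you intend this as a genuine proof rather than a proof sketch, that step should be made explicit.
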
 

By a compactness argument one can prove 
\begin{lemma} \label{le:katok} There are no accumulation points in $\cL{sp} (\Omega)$ for any polynomial $\Omega$.
	\end{lemma}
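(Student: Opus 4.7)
The plan is to establish the equivalent assertion that for every $L>0$, the set $\cL{sp}(\Omega)\cap(0,L]$ is finite. I would work with the standard unfolding (developing) construction for polygonal billiards: given any closed (possibly diffractive) geodesic $\gamma$ in $\Omega$, reflect $\Omega$ successively across each edge of $\partial\Omega$ that $\gamma$ strikes, and continue through a chosen sector at each vertex that $\gamma$ traverses diffractively. The result is a straight line segment $\tilde\gamma\subset\R^2$ of length equal to $|\gamma|$, decorated with a combinatorial record $\sigma(\gamma)$: the cyclic word in the edges and vertices of $\partial\Omega$ encountered in order.

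The first observation is that each combinatorial type $\sigma$ contributes at most one length to $\cL{sp}(\Omega)$. Indeed, the closed geodesics of type $\sigma$ form at most a one-parameter family of mutually parallel segments in the unfolded picture, obtained by translating the starting point along the initial edge, and every member of such a family has the same length.

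The second and main step is to bound, in terms of $L$, the number of combinatorial types realized by closed geodesics of length $\le L$. Let $n(\sigma)$ denote the number of boundary hits in $\sigma$. Away from a sufficiently small fixed neighborhood $U$ of the vertex set, the distance between any two non-adjacent portions of $\partial\Omega$ is bounded below by a positive constant $d_0(\Omega,U)$; this yields a linear-in-$L$ bound on the number of segments of $\gamma$ that lie entirely outside $U$. The remaining segments cluster near one of the finitely many vertices, and there one uses the local angular structure at each vertex (and Keller's democratic diffraction law at the diffractive ones) to bound the number of near-vertex excursions that a closed geodesic of length $\le L$ can make. Together these yield $n(\sigma)\le C(\Omega)\,L$, and since $\partial\Omega$ has only finitely many edges and vertices, only finitely many combinatorial types of that combinatorial length are possible.

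The main obstacle is the control of excursions near vertices, where two consecutive segments of a diffractive trajectory can both be arbitrarily short and the naive distance bound fails. A cleaner alternative that I would fall back on is to double $\Omega$ along $\partial\Omega$ to form its associated ESCS $X$, and then invoke the standard fact that on any compact Euclidean surface with conical singularities the length spectrum is locally finite; since closed generalized geodesics in $\Omega$ correspond to closed geodesics on $X$, this immediately gives the conclusion, namely that $\cL{sp}(\Omega)$ has no accumulation point.
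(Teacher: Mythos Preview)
The paper does not give a proof: it asserts the lemma follows from ``a compactness argument'' (left undetailed) and then notes it is also an immediate consequence of Katok's much stronger theorem that the counting function for geodesics between vertices of a polygon has sub-exponential growth. Your proposal supplies an actual argument along the lines of the compactness route, and it is correct.

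Your hesitation about the near-vertex step is unnecessary. In a wedge of opening $\theta$, a billiard arc can reflect at most $\lceil \pi/\theta\rceil$ times before exiting (just unfold the wedge), so each excursion into a fixed vertex neighborhood contributes only boundedly many reflections. A diffractive segment \emph{starting} at a vertex unfolds to a straight ray from the cone point and hence must leave the neighborhood before it can return, so consecutive vertex hits are separated by length at least some $\delta(\Omega)>0$. This yields $n(\sigma)\le C(\Omega)\,L$ directly, and the number of words of that length in the finite alphabet of edges and vertices is finite---at most exponential in $L$, which is all you need here. Katok's theorem, which the paper invokes instead, is precisely the sharper statement that this growth is in fact sub-exponential. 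Your ESCS fallback is also valid and is in the spirit of how the paper handles the Poisson relation elsewhere, but it is not needed.
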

	In other words, the length spectrum of  a  polygon is a discrete set in $[0, \infty)$. 
	However,  this lemma also follows immediately from a much stronger result of Katok, namely 
	\begin{theorem}  \cite{katok}
	The counting function of the lengths of geodesics starting and ending at vertices of a polygonal domain is of sub-exponential growth. 
	\end{theorem}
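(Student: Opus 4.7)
The plan is to reduce the count to a zero-entropy statement for the billiard flow by way of the standard unfolding construction. Given a vertex-to-vertex geodesic $\gamma$ of length $L$ in $\Omega$, one reflects $\Omega$ successively across the edges that $\gamma$ meets; in the resulting chain $\Omega_0 = \Omega, \Omega_1, \dots, \Omega_n$ of reflected copies, the trajectory $\gamma$ unfolds to a straight segment joining a vertex of $\Omega_0$ to a vertex of $\Omega_n$. Since the segments between successive bounces have length bounded below by a positive constant $c(\Omega)$ (twice the distance between non-adjacent edges of $\Omega$, minus a margin near vertices), the number of bounces on a saddle connection of length $\leq L$ is at most $CL$ for some $C = C(\Omega)$.

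Next I would thicken each such unfolded segment to the maximal parallel tube of trajectories that stays inside the chain of reflected polygons. Distinct saddle connections, viewed in phase space $T^1\Omega$, produce disjoint tubes: the starting vertex and the direction together determine the combinatorial unfolding. Each tube of length $\ell$ and transverse width $w$ contributes Liouville measure $\ell w$, and the total Liouville mass $2\pi|\Omega|$ bounds the sum. If tube widths were bounded below this would already give a linear bound, but tubes can be arbitrarily thin because a saddle connection may graze additional vertices; the purely volumetric argument is therefore insufficient and exponential growth is a priori possible since the naive count of combinatorial types over the alphabet of $s$ sides is $s^{CL}$.

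The crux, and the substantive content of Katok's theorem, is then to show that the number of distinct combinatorial types of vertex-to-vertex geodesics of length $\leq L$ is sub-exponential. For this I would invoke the dynamical fact that the billiard flow on a polygon has zero topological entropy: away from the (measure zero) set of orbits hitting a vertex, the Poincaré section map is a piecewise isometry, so nearby orbits separate at most linearly in time, and any $(\varepsilon, L)$-separated set has cardinality growing sub-exponentially in $L$ by a Bowen-type argument. Each saddle connection of length $\leq L$ yields an orbit segment, and a short transverse perturbation at one endpoint realizes distinct saddle connections as $(\varepsilon_0, L)$-separated when $\varepsilon_0$ is chosen smaller than the minimum vertex-to-edge distance; hence the number of combinatorial types is sub-exponential.

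Finally I would combine the two ingredients: within a single combinatorial type the unfolded target vertices form a discrete set of bounded density on parallel lines, so at most polynomially many saddle connections of length $\leq L$ occur per type. Summed over sub-exponentially many types this yields the claimed sub-exponential bound $N(L) \leq e^{o(L)}$. The main obstacle is the entropy step; establishing zero topological entropy requires the non-hyperbolic, unipotent nature of the derivative cocycle for polygonal billiards, and this is where the real work lies. The unfolding, tube construction, and combinatorial bookkeeping are comparatively routine and serve only to translate the dynamical statement into the counting statement for saddle connections that we need.
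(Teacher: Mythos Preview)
The paper does not prove this theorem; it is quoted from Katok's 1987 paper \cite{katok} purely as a black box, in order to deduce the (much weaker) Lemma~\ref{le:katok} that the length spectrum of a polygon has no accumulation points. There is therefore no ``paper's own proof'' to compare your attempt against.

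As for your sketch itself: the overall architecture---unfold, reduce to a complexity/entropy statement, then count within each combinatorial type---is indeed the shape of Katok's argument. Two remarks. First, your claimed uniform lower bound $c(\Omega)$ on the length of a single billiard segment is false: near a vertex with a small angle a trajectory can undergo many very short reflections in a row. What is true, and what Katok actually uses, is the weaker averaged statement that the number of reflections along an orbit of length $L$ is $O(L)$; this follows from the invariance of Liouville measure (equivalently, an integral-geometric/Santal\'o identity) rather than from a pointwise bound. Second, your passage from ``zero topological entropy'' to ``sub-exponentially many saddle connections'' via a Bowen separated-set argument glosses over the main subtlety: saddle connections lie in the singular set of the flow, precisely where the entropy theory does not apply directly. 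Katok's paper handles this by a direct estimate on the number of singular orbit segments (or equivalently on the growth of the complexity function for the interval-exchange section), and that step is where most of the work is. Your sketch identifies the right ingredients but underestimates the difficulty of connecting them.
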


By the Poisson Relation together with Lemma \ref{le:katok}, the singularities of the wave trace are discrete.  Consequently we can enumerate the singularities and for example speak about the shortest, or the second shortest positive singularity.  It is also possible to find test functions that are supported in a neighborhood of one and only one singular time.  This allows us to define the order of a singularity.   

\begin{defn} Suppose $t_0>0$ is in the singular support of  $w_\Omega ^\mathcal B(t)$.  Let $\hat \rho(t)$ be a cutoff function supported in a neighborhood of $t_0$, such that $\hat \rho \equiv 1$ near $t_0$.  Assume that 
\[ \supp \hat \rho \cap \Sings w_\Omega ^\mathcal B= \{t_0\}. \] 
We define \em the frequency domain contribution of the singularity $t_0$ \em by
\[  I_{t_0, \mathcal{B}}(k) := \int_\R \hat \rho(t) e^{-ikt}\, \text{Tr}\, e^{i t\sqrt{\Delta^\mathcal B_{\Omega}}} dt. \]  
We say that a singularity $t_0$ is of order $a \in \R$ if 
\[ I_{t_0, \mathcal{B}}(k) = c k^a + o(k^a), \quad k \to \infty. \] 
Above, $c$ is a constant that depends on the microlocal germ of the domain near the closed geodesics of length $t_0$.
We say $t_0$ is \textit{at most of order $a$} if 
\[ I_{t_0, \mathcal{B}}(k) = O (k^a). \] 
\end{defn}

\begin{remark} \label{order} We note that near a singularity at $t=t_0$ of order $a$ as defined above, the wave trace belongs to $H^{-s}(\R)$ for all $s > a+\frac{1}{2}$, but does not belong to $H^{-s}(\R)$ for $s=a+\frac{1}{2}$.
\end{remark}

To use the Poisson relation, we investigate the shortest closed geodesics in trapezoids.  Parallel families of closed geodesics play a central role.

\section{Closed geodesics inside a trapezoid and their singularity contribution} \label{s: closed geodesics}
We start this section by recalling some standard facts about periodic orbits inside a polygon from Gutkin (see \cites{g1, g2}). 
It is important to note that in the dynamical systems literature periodic orbits inside a polygon refer to our non-conical closed geodesics, i.e. closed geodesics that do not hit any vertices of the polygon. Also, {generalized periodic orbits} refer to what we call {geometric conical geodesics}, which are precisely limits of non-conical closed geodesics. Diffractive periodic orbits that are not geometric (see for example Figures \ref{b}, \ref{h alpha}) are not considered in the purely dynamical systems references, but are of great interest in PDE because of their contribution to the singularities of solutions to the wave equation. 
\\

We start by defining prime periodic  orbits.  

\begin{defn}
A non-conical periodic orbit is called prime if it is not a multiple of another one.
\end{defn}
We then have the following classification. 
\begin{prop} [Gutkin \cite{g2}, Corollary 1]
	 Let $g$ be a prime non-conical closed geodesic of period $n$ in a polygon $P$.  Here, period refers to the number of times the orbit meets the edges.
	 \begin{enumerate} 
	 	\item If $n$ is even, then $g$ is contained in a band of parallel periodic orbits, all of the same length. Let $S$ be the maximal band containing $g$. Then $S$ is a closed flat cylinder. Each of the boundary circles of $S$ is a conical geodesic of $P$. 
	 	\\
	 	
	 	\item If $n$ is odd, then the orbit $g$ is isolated. The maximal strip $S$ of periodic orbits parallel to $g$ is a flat M\"obius band, and $g$ is the middle circle of $S$. Precisely, $S$ is the union of the periodic orbits of length twice the length of $g$ that are parallel to $g$. The boundary circle of $S$ is a conical geodesic. 
	 	 \end{enumerate} 
	\end{prop}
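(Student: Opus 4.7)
The plan is to use the \emph{unfolding} construction for billiard trajectories. Given the prime non-conical closed geodesic $g$ of period $n$, label the edges it strikes in cyclic order $e_1,\ldots,e_n$. Reflect $P$ across $e_1$, then reflect that copy across the image of $e_2$, and so on, producing a strip $\widetilde{P}$ consisting of $n$ isometric copies of $P$ joined along shared edges. In this picture $g$ straightens to a Euclidean segment beginning at a point $p$ in the original copy and ending at a point in the last copy, and the closing-up condition is precisely that this endpoint equals $\phi(p)$, where $\phi\colon\R^2\to\R^2$ is the unique isometry identifying the last copy of $P$ with the first. Two observations drive the analysis: $\phi$ is a composition of $n$ reflections, hence orientation preserving exactly when $n$ is even, and because $g$ closes as an oriented trajectory $\phi$ must fix its direction vector $v$.

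For $n$ even, the only orientation-preserving plane isometries fixing a direction vector are translations, so $\phi$ is translation by $Lv$, where $L$ is the length of $g$. Then for any transverse vector $\delta\perp v$ the segment from $p+\delta$ to $\phi(p+\delta)=p+\delta+Lv$ is parallel to $g$, of the same length, and descends to a closed geodesic in $P$ whenever it stays inside $\widetilde{P}$. The set of admissible $\delta$ is a maximal open interval, and these parallel trajectories fill a flat region whose quotient by the translation $\phi$ is a flat cylinder with core $g$. Compactness of $P$ forces the interval to be bounded; on its boundary the unfolded segment first meets a vertex of $\widetilde{P}$, producing a conical geodesic which is precisely the boundary circle of the cylinder on that side, and similarly on the other side.

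For $n$ odd, $\phi$ is orientation reversing and fixes $v$, so $\phi$ must be a glide reflection with axis parallel to $v$ and glide vector $Lv$ (a pure reflection would force $Lv=0$, impossible). In axis-aligned coordinates $\phi(x,y)=(x+L,-y)$, and the closing condition $\phi(p)=p+Lv$ forces $p$ to lie on the axis, so $g$ is isolated at length $L$. For $\delta\neq 0$ perpendicular to $v$, the single-loop perturbation fails to close by $-2\delta$, but $\phi^2$ is translation by $2Lv$, so the double-loop perturbation closes with length $2L$. The resulting parallel family of double loops fills a strip whose quotient by $\phi$, identifying $(0,y)$ with $(L,-y)$, is a flat M\"obius band with core $g$; its single boundary circle is again the conical geodesic obtained when the unfolded segment first strikes a vertex of $\widetilde{P}$.

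The main technical obstacle is in the odd case: one must correctly identify $\phi$ as a glide reflection (ruling out pure reflections), verify that the quotient strip is genuinely a M\"obius band rather than a cylinder (i.e.\ that the orientation reversal survives to the quotient), and confirm that the maximal transverse interval is bounded exactly at the moment the unfolded segment first strikes a vertex, so that the boundary is an honest conical geodesic and not a non-geometric limit. Once these structural points are in hand, the parallel-family geometry and the description of the boundary orbits follow directly from the compactness of $P$.
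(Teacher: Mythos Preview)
The paper does not supply its own proof of this proposition; it is stated with attribution to Gutkin \cite{g2}, Corollary~1, and used as a black box. Your sketch is the standard unfolding argument that underlies Gutkin's result, and it is correct: the parity of $n$ determines whether the holonomy isometry $\phi$ is a translation or a glide reflection along the direction of $g$, which in turn forces the cylinder/M\"obius dichotomy, with the boundary circles arising exactly when the unfolded segment first meets a vertex. Since there is no proof in the paper to compare against, the relevant comment is simply that your argument recovers the cited result by the expected method.
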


This proposition makes an important distinction between the prime periodic billiard orbits of odd and even periods. The former are isolated; the latter form periodic cylinders, hence are never isolated. Cylinders of periodic orbits cause a larger singularity in the wave trace.  Let us now discuss some examples.

\subsection{Important examples of closed geodesics inside a trapezoid}
Here, we list only examples of closed geodesics inside a trapezoid that are key to our argument of the main theorem. See Figures 
\ref{h}, \ref{orthic}, \ref{b}, \ref{h alpha}. We postpone the study of their wave trace singularity contributions to the next section.
\begin{figure} [H]
	\centering
	\begin{minipage}{.5\textwidth}
		\begin{tikzpicture}[scale=0.55]
		\draw[thick ] (0,0) -- (5.8,0)  -- (4.75, 2.4)  -- (.65,2.4) --(0,0) ; 
		\draw  node at(0.65,2.4) [draw=red,line width=0.1pt, circle,fill=blue!50,inner sep=0pt,minimum size=2pt,line width=1pt] {} ;   
		\draw[very  thick, red!80]  (.65,2.3)--(0.65,0);
		\draw[very  thick, red!80]  (4.75,2.3)--(4.75,0);
		\draw  node at(4.75,2.4) [draw=red,line width=0.1pt, circle,fill=blue!50,inner sep=0pt,minimum size=2pt,line width=1pt] {} ;   
		\draw[very  thick, blue!70]  (1.333,2.4)--(1.333,0);
		\draw[very  thick, blue!70]  (2.01633,2.4)--(2.016333,0);
		\draw[very  thick, blue!70]  (2.7,2.4)--(2.7,0);
		\draw[very  thick, blue!70]  (3.383,2.4)--(3.383,0);
		\draw[very  thick, blue!70]  (4.066,2.4)--(4.066,0);
		\end{tikzpicture} 
		\centering
		\caption{The $2h$ family.}
		\label{h}
	\end{minipage}%
	\begin{minipage}{.5\textwidth}
		\begin{tikzpicture}[scale=0.45]
		\draw[thick ] (0,0) -- (6,0)  -- (2.833,3.6)  -- (1.2,3.6) --(0,0) ; 
		\draw[dash pattern=on 1.5pt off 1pt on 1.5pt off 1pt, red] (1.8,0) -- (1.8, 3.6)   ; 
		\draw[dash pattern=on 1.5pt off 1pt on 1.5pt off 1pt, red] (.7,1.9) -- (6,0)   ; 
		\draw[dash pattern=on 1.5pt off 1pt on 1.5pt off 1pt, red] (3.5,2.9) -- (0,0)   ; 
		\draw[very thick,blue! 70] (3.5,2.85) -- (1.8,0)--(.67,1.93)--(3.5,2.85)   ; 
		\end{tikzpicture} 
		\centering
		\caption{The $l_F$ orbit. } 
		\label{orthic}
	\end{minipage}
	
	\vskip\baselineskip
	\begin{minipage}{.5\textwidth}
		\begin{tikzpicture}[scale=0.70]
		\node at (1.3, 2.3) [align=center]{$b$};
		\draw[thick ] (0,0) -- (3.8,0)  -- (2., 2.)  -- (.65,2.) --(0,0) ; 
		\draw  node at(0.65,2.) [draw=red,line width=0.1pt, circle,fill=blue!50,inner sep=0pt,minimum size=2pt,line width=1pt] {} ;   
		\draw[line width=.5mm, blue](2., 2.)  -- (.745,2.) ;
		
		\draw  node at(2,2.) [draw=red,line width=0.3pt, circle,fill=blue!50,inner sep=0pt,minimum size=2pt,line width=1pt] {} ;   
		\end{tikzpicture} 
		\centering
		\caption{The $2b$ orbit.}
		\label{b}
	\end{minipage}%
	\begin{minipage}{.5\textwidth}
		\begin{tikzpicture}[scale=0.50]
		\draw[thick ] (0,0) -- (6,0)  -- (3,3)  -- (1,3) --(0,0) ; 
		\draw  node at(0, 0) [draw=red,line width=0.1pt, circle,fill=blue!50,inner sep=0pt,minimum size=2pt,line width=1pt] {} ;   
		\draw[very  thick, blue!70]  (3.3,2.7) -- (0,0)  ;
		\end{tikzpicture} 
		\centering
		\caption{The $2h_\alpha$ orbit. } 
		\label{h alpha}
	\end{minipage}
	
\end{figure}

\textbf{The $2h$ family.} It consists of the bouncing ball orbits parallel to the height of the trapezoid. It is a cylinder of periodic orbits of order $n=2$. The area it sweeps  is $2hb$; see Figure \ref{h}
\\

\textbf{The Fagnano orbit (or $l_F$) {and its double}.}  This orbit is also called the \emph{orthic} triangle and  is the triangle that joins the feet of the altitudes of the extended triangle of the trapezoid; see Figures \ref{orthic} and \ref{trapezoid}.  The Fagnano orbit exists only if the extended triangle is acute, and if the height of the trapezoid is not too short. In fact one can easily see that this condition on $h$ is
\begin{equation} \label{lF exists} h \geq \max \{B \sin \alpha \cos \alpha, B \sin \beta \cos \beta \} = B \sin \beta \cos \beta. \end{equation}
The last equality happens because the extended triangle of the trapezoid being acute requires that $\alpha + \beta \geq \frac \pi 2$, which implies that $\sin 2\alpha \leq \sin 2 \beta$. The length of the Fagnano orbit is given by
\[ l_F= 2B \sin \alpha \sin \beta. \] 
In the special case $\alpha = \frac{\pi}{2}$, the Fagnano orbit becomes degenerate and collapses into the $2h_\alpha$ orbit. 

The doubled Fagnano orbit is a closed geodesic that belongs to a one-parameter family of closed geodesics forming a flat M\"obius strip. We represent this family by its length which is $2l_F$; see Figure \ref{doubled fagnano}

 \begin{figure}[H]  
 	\begin{tikzpicture} [scale=0.7]
 	\draw (0,0) node[align=center, below]{A} -- (2,0)  node[align=center, below]{B}  -- (1.5,4) node[align=right, above]{C} -- (0,0) -- (-1.51, 1.1)  node[align=center, below]{B}  -- (1.5,4) --  (-2.64, 2.96) node[align=center, above]{A} -- (-1.51, 1.1);
 	\draw (-2.64, 2.96) -- (-5.08, -0.55) node[align=left, left]{C} -- (-1.51, 1.1);
 	\draw (-2.64, 2.96) -- (-4.58, 3.45) node[align=center, above]{B} -- (-5.08, -0.55) -- (-6.58, 3.45) node[align=center, above]{A} -- (-4.58, 3.45);
 	\draw[red] (2.003,0) node[align=center, below]{} -- (-4.577, 3.45) node[align=center, above]{}; 
 	\draw[blue] (1.5,0) node[align=center, below]{} -- (0.25, 0.66) ; 
 	\draw[blue] (0.25, 0.66)  -- (-5.08, 3.45) node[align=center, above]{}; 
 	\draw[blue] (1.,0) node[align=center, below]{} -- (-5.58, 3.45) node[align=center, above]{}; 
 	\draw[red] (0.613,0) node[align=center, below]{} -- (-5.967, 3.45) node[align=center, above]{}; 
 	\end{tikzpicture} 
 	\caption{The $2l_F$ family of closed geodesics, unfolded.} 
 	 \label{doubled fagnano} 
 	 \end{figure}
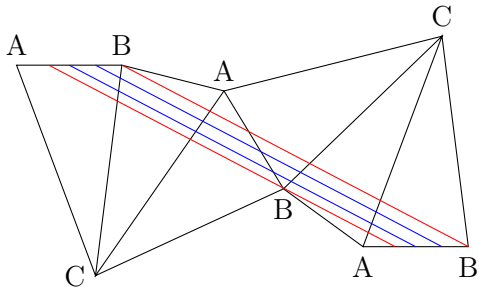

\textbf{The $2b$ orbit and its multiples.} There is a closed geodesic that we identify with its length, $2b$, created by bouncing along the top side of the trapezoid; see Figure \ref{b}. We also call the $m$-th multiple of this orbit, the $2mb$ orbit. These are diffractive periodic orbits and produce mild singularities in the wave trace. The larger the $m$, the milder the singularity. 
In our previous paper \cite{hlrn}, we took advantage of the contribution of the singularity $2b$ to prove that the Neumann spectrum determines a trapezoid among other trapezoids. 
\\

\textbf{The height $2h_\alpha$.} It corresponds to the height of the extended triangle of the trapezoid from the larger base angle $\alpha$;  see Figure \ref{h alpha}. The length of the orbit is given by
\[ 2 h_\alpha = 2B \sin \beta. \] 

\begin{figure} [H]
\begin{tikzpicture}[scale=0.8]
\draw[thick] (0,0) -- (4,0)  -- (2.5,1.5)  -- (.5,1.5) -- (0,0) ; 
\node at (2,0) [align=center, below]{$B$};
\node at (1.5,1.5) [align=center, above]{$b$};
\node at (.05, .8) [align=center]{$\ell$};
\node at (3.55, .9) [align=center]{$\ell'$};
\node at (.25, .15) [align=center,yshift=.2mm]{$\alpha$};
\node at (3.5, .15) [align=center,xshift=-.5mm,yshift=.2mm]{$\beta$};
\draw[red] (1.5,0)--(1.5,1.5);
\node at (1.7, .8) [align=center]{$h$};
\draw[dashed] (2.5,1.5)--(1,3)--(.5,1.5);
\node at (1.07, 2.62) [above,yshift=2.5mm]{$\gamma$};

\end{tikzpicture} 
\centering
\caption{A trapezoid $T$ and its extended triangle $\widehat T$.} 
\label{trapezoid} 
\end{figure}
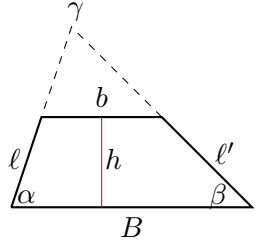%
This orbit is diffractive unless $\alpha = \frac{\pi}{2}, \frac{\pi}{3}, \text{or}\, \frac{\pi}{4}$. This is because when $\alpha = \frac{\pi}{N}$, $N \geq 5$, the height $h_\alpha$ is not inside the extended triangle as in this case $\gamma >  \pi /2$. When $\alpha = \frac{\pi}{3}$ or $\alpha =\frac{\pi}{4}$, we have $\gamma \geq \alpha \geq \beta$ therefore 
\begin{equation} \label{h < h alpha} 2h < 2h_\alpha  . \end{equation}
In the case $\alpha = \frac{\pi}{2}$, the orbit $2 h_\alpha$ is an isolated non-diffractive obit. The orbit $2h_\alpha$ is not always isolated. In fact in an isosceles trapezoid (i.e. $\alpha = \beta$) and only in this case, this orbit belongs to a cylinder of periodic orbits. See Figure \ref{c11} and the next definition. 
\\

The following family of closed geodesics exists in both trapezoids and triangles when the base angles satisfy a certain relationship.  

\begin{defn}\label{cmn}
Consider a trapezoid or a triangle when $m\alpha=n\beta\leq\pi/2$, with $m\leq n$, and  $m$ and $n$ are co-prime positive integers. Then the orbit $2 h_\alpha$ belongs to a parallel family of periodic orbits called $C_{m, n}$ that contains $2h_\alpha$ as a boundary component. These families were introduced in \cite{vgs}.  In particular, the family exists for $m=n=1$, that is for isosceles trapezoids and triangles; see Figure~\ref{c11}  \end{defn}

\begin{figure}[H]
\begin{minipage}[b][5cm][s]{.45\textwidth}
			\begin{tikzpicture}[scale=0.7]
	\draw[thick] (0,0) -- (6,0)  -- (3.691,4) --  (2.309, 4)--(0,0); 
	\draw[thick, red] (0,0)--(4.5,2.598); 
	\draw[thick, red](6,0)--(1.5, 2.598);
	\draw[blue]  (3,0)-- (0.75,1.299);
	\draw[blue] (3,0) -- (5.25, 1.299); 
	\draw[blue] (1,0) -- (0.25, 0.433); 
	\draw[blue]  (1,0)  -- (4.75, 2.165);
	\draw[blue] (2,0) -- (0.5, 0.866); 
	\draw[blue] (2,0) -- (5, 1.732);
	\draw[blue] (5,0) -- (5.75, 0.433); 
	\draw[blue]  (5,0)  -- (1.25, 2.165);
	\draw[blue] (4,0) -- (5.5, 0.866); 
	\draw[blue] (4,0) -- (1, 1.732);
	
	\node at (3, 0) [below]{};
	\node at (0,0) [left]{$\alpha$}; 
	\node at (6,0) [right] {$\beta=\alpha$}; 
	\node at (1, 1.732) [left] {}; 
	\node at (5, 1.732) [right] {}; 
	\end{tikzpicture}
	\centering
	\caption{Unobstructed $C_{1,1}$ family.} 
	\label{c11} 
\end{minipage}
\end{figure}
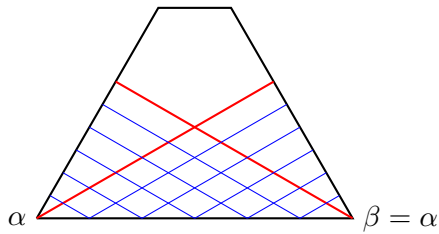


\subsection{Contributions of singularities in the wave trace} \label{s:classwtsing} 
Here we show that certain closed geodesics and families of closed geodesics contribute singularities to the wave trace, and we determine the order of these singularities.  We begin with the mildest singularities.  The following proposition was proved in \cite{hlrn} only in the case $m=1$. We will need that the $m$-th iterations of the $2b$ orbit produce very mild singularities, and hence can be ruled out later in the proof of our main theorem.
  
\begin{prop} \label{2mb}
	Let $T$ be a trapezoid that is not a rectangle. Let $m\in \mathbb N$.  Suppose there are no closed geodesics in $T$ of length $2mb$, other than the $m$-th iteration of the $2b$ orbit in Figure \ref{b}. Then, 
	\begin{enumerate}
	\item If $\beta \neq \frac{\pi}{2}$, 	$$I_{2mb, \mathcal B}(k) = O(k^{-m}).$$ 
	\item If $\beta = \frac{\pi}{2}$,    	   $$I_{2mb, \mathcal B}(k) = O(k^{-m/2}).$$ 
	\end{enumerate}
	In other words, the order of the singularity at $2mb$ is at most $-\frac{m}{2}$.  
	\end{prop}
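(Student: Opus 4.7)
The plan is to apply the microlocal diffractive wave parametrix developed by Hillairet and Wunsch on euclidean surfaces with conical singularities, either on an ESCS obtained by doubling $T$ or directly on $T$ via the method of images to handle the reflections off the boundary edges. Since, by hypothesis, the only closed geodesic of length $2mb$ is the $m$-th iterate of the $2b$ orbit, the Poisson relation allows us to choose $\hat\rho$ so that $I_{2mb,\mathcal{B}}(k)$ is determined entirely by the singularity produced by this single diffractive orbit. The problem thus reduces to bounding the order of that singularity.

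The $m$-th iterate bounces back and forth $m$ times between the two top vertices of $T$, whose interior angles are $\pi-\alpha$ and $\pi-\beta$, making $2m$ vertex encounters in total. Microlocally near the orbit, the half-wave propagator factors as a composition of $2m$ elementary building blocks: free euclidean propagation along a segment of length $b$, followed by the conic scattering operator at the next vertex. The free propagation is a standard FIO of order $0$, while the conic scattering operator at a cone of angle $\theta$ splits into a specular (geometric) part of order $0$, present precisely when $\theta=\pi/N$ for some integer $N\geq 2$, and a strictly diffractive part of order $-\tfrac{1}{2}$ in the frequency parameter $k$, weighted by an explicit diffraction coefficient. Under composition, and then integration against $\hat\rho(t)e^{-ikt}$, these orders add.

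The two cases of the proposition correspond to whether both top vertices contribute diffractively or only one does. In case (1), both top angles $\pi-\alpha$ and $\pi-\beta$ fail to be of the form $\pi/N$, so all $2m$ reflections are strictly diffractive and one obtains $O(k^{-m})$. In case (2), one of the top vertices has cone angle $\pi/2$ and is therefore non-diffractive, so only $m$ of the $2m$ reflections contribute a factor of $k^{-1/2}$, yielding $O(k^{-m/2})$.

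The main technical step, which I expect to be the chief obstacle, is justifying that the $2m$-fold composition of FIOs described above really has order equal to the sum of the individual orders, with no lower-order parasitic contributions arising from the successive free-propagation and scattering factors, and that the trace operation preserves this additive count. For $m=1$ this computation is carried out in \cite{hlrn}. For general $m$ my plan is to iterate the $m=1$ argument loop by loop, viewing the $m$-th iterate as $m$ consecutive copies of the $m=1$ round trip chained together by standard half-wave propagators, and tracking the additive composition of symbol orders through the chain until the trace is taken.
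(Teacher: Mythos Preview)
Your approach is essentially the paper's own: the paper simply refers back to the $m=1$ computation in \cite{hlrn} and notes that for general $m$ one replaces the number of diffractions $n$ there by $2m$ or $m$, depending on whether both top corners are diffractive or only one is --- exactly your ``$-\tfrac12$ per diffraction'' FIO-composition count, with the iteration loop by loop.

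One small slip in your case-matching is worth flagging. Under the standing convention $0<\beta\le\alpha\le\pi/2$, the hypothesis $\beta=\pi/2$ would force $\alpha=\pi/2$ and hence a rectangle, which is excluded; so case~(2) as literally stated is vacuous. The dichotomy that actually distinguishes $2m$ from $m$ diffractions is on $\alpha$, not $\beta$: the top angle $\pi-\alpha$ is non-diffractive precisely when $\alpha=\pi/2$ (since $\pi-\alpha\ge\pi/2$ rules out $\pi/N$ with $N\ge3$). The paper's own proof makes this explicit, writing ``$n=2m$ if $\alpha\neq\pi/2$, and $n=m$ if $\alpha=\pi/2$'', so the $\beta$ in the proposition statement appears to be a typo that you have inherited rather than caught. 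In particular, your assertion that in case~(1) ``both top angles $\pi-\alpha$ and $\pi-\beta$ fail to be of the form $\pi/N$'' is not implied by $\beta\neq\pi/2$ alone, since one can have $\alpha=\pi/2$ and $\beta<\pi/2$; in that sub-case only $m$ diffractions occur and the bound is $O(k^{-m/2})$, not $O(k^{-m})$.
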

\begin{proof} The proof is identical to the case $m=1$ provided in \cite{hlrn}; see Section 4 and in particular Pages 3774-3775. The only change that has to be made is that in Equation (4.1) of \cite{hlrn} we have to let $n=2m$ if $\alpha \neq \frac{\pi}{2}$, and $n=m$ if $\alpha = \frac{\pi}{2}$ and follow the same argument assuming throughout that the number of diffractions is $n$. \end{proof}

Next we study the singularity at time $\ell_F$ for the wave trace of a trapezoid in which the Fagnano triangle exists. In fact we need a more general statement on non-conical periodic orbits with an odd number of reflections. As we discussed earlier, by \cite{g1, g2} such periodic orbits are automatically isolated. 

\begin{prop} \label{Odd}
	Let $T$ be a trapezoid (or in general a polygon). Let $g$ be a periodic non-conical geodesic of length $l$ with an odd period.
	Suppose there are no other closed geodesics in $T$ of length $l$. Then as $k \to +\infty$, we have an asymptotic expansion of the form
	\[  I_{l, \mathcal{B}}(k) = e^{-ikl} \sum_{j=0}^\infty c_j(g) k^{-j}. \] 
	The constant $c_0(g)$ is nonzero.  Hence the order of the wave trace singularity at $t=l$ is $0$. 
\end{prop}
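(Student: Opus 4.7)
The plan is to reduce this to the classical trace formula for an isolated, non-degenerate closed billiard trajectory in a smooth planar region, and then verify the non-degeneracy hypothesis using the M\"obius structure of odd-period orbits.

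First, I would exploit that $g$ is non-conical: there exists $\delta > 0$ such that $g$ stays at distance at least $\delta$ from every vertex of $T$. Choose a tubular neighborhood $U$ of $g$ of width less than $\delta$, so that $U \cap \partial T$ consists of open segments lying strictly in the interiors of edges. Inside $U$ the boundary is smooth and $g$ is an ordinary broken geodesic. Using the assumption that $g$ is the only closed geodesic of length $l$ together with the Hillairet--Wunsch propagation of singularities on the associated ESCS (as already invoked in Section \ref{s:wavetrace}), I would insert in the definition of $I_{l, \mathcal{B}}(k)$ a microlocal cutoff that localizes to a conic neighborhood of the conormal directions of $g$. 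The complementary piece contributes $O(k^{-\infty})$, since any other ray configuration realizing length $l$ would produce either another closed geodesic or a diffractive orbit of length $l$, both excluded by hypothesis.

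Second, with the problem now localized to a neighborhood where everything is smooth, I would apply the Chazarain/Duistermaat--Guillemin trace formula for isolated, non-degenerate closed broken bicharacteristics, in its version for domains with boundary (Guillemin--Melrose). This directly produces the asymptotic expansion
\[ I_{l, \mathcal{B}}(k) \sim e^{-ikl} \sum_{j=0}^{\infty} c_j(g) \, k^{-j}, \]
with leading coefficient proportional to $|\det(I - P_g)|^{-1/2}$, times a product of reflection coefficients ($-1$ per reflection for Dirichlet, $+1$ for Neumann) and a Maslov phase; none of these factors vanishes.

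Third, I would verify the non-degeneracy $\det(I-P_g) \neq 0$ using the M\"obius classification recalled from Gutkin: since $g$ has odd period, it sits as the core circle of a flat M\"obius strip of parallel orbits of length $2l$, whose boundary is a conical geodesic. On the one-dimensional transverse section to $g$, the linearized Poincar\'e map $P_g$ is therefore multiplication by $-1$, giving $\det(I - P_g) = 2$ and hence $c_0(g) \neq 0$.

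The main obstacle will be the microlocal localization in step one: while it is clear at the level of sets that no other closed geodesic of length $l$ exists, one must ensure that diffractive contributions to $I_{l, \mathcal{B}}(k)$ at the time $t = l$ coming from orbits passing near (but not exactly hitting) corners are genuinely negligible after microlocal cutoff. This is where I expect the careful use of Hillairet--Wunsch-type propagation in the ESCS double of $T$, together with finite speed of propagation, to do the heavy lifting, allowing the interior smooth-domain parametrix construction to govern the full singularity at $t = l$.
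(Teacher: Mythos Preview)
Your overall strategy matches the paper's: reduce to the Guillemin--Melrose trace formula for isolated non-degenerate reflecting rays, then verify non-degeneracy of $P_g$. Your steps 1--2 spell out in more detail what the paper does in a single citation of \cite{gm79}, Theorem~1.

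The gap is in step 3. The linearized Poincar\'e map $P_g$ appearing in the Guillemin--Melrose formula is the derivative of the $n$-th iterate of the \emph{billiard map}, which acts on the two-dimensional boundary phase space $(x,\theta)$ (arclength along the reflecting edge, angle of incidence); it is not a map on a one-dimensional transverse section. Your M\"obius-strip observation does show that parallel orbits (fixed $\theta=\theta_0$, varying $x$) are interchanged after $n$ bounces, which gives the first column of $P_g$ as $(-1,0)^T$; but that is only half of a $2\times 2$ matrix and does not by itself yield $\det(I-P_g)$. The paper computes the remaining entry directly by unfolding the polygon along $g$: because $n$ is odd the returned copy of the initial edge has reversed orientation, forcing both $x'(x,\theta_0)=2x_0-x$ and $\theta'(x,\theta)=2\theta_0-\theta$, so
\[
P_g=\begin{pmatrix}-1 & *\\ 0 & -1\end{pmatrix},\qquad \det(I-P_g)=4,
\]
not $2$. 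Your conclusion $c_0(g)\neq 0$ happens to be correct, but the value $2$ and the one-dimensional reasoning are not. A clean repair within your own framework: the M\"obius structure already gives the first column $(-1,0)^T$, and since the billiard map is symplectic ($\det P_g=1$) the $(2,2)$ entry is forced to be $-1$, whence $\det(I-P_g)=4\neq 0$.
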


Consequently, we have 
\begin{cor} \label{OrthicSingularity}
Let $T$ be a trapezoid. Suppose the Fagnano triangle lies in $T$ and is non-diffractive as in Figure \ref{orthic}.  Suppose there are no other closed geodesics in $T$ of length $l_F$. Then the order of the wave trace singularity at $t=l_F=2B \sin \alpha \sin \beta$ is $0$. 
\end{cor}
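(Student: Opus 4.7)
The plan is to derive the corollary as a direct consequence of Proposition \ref{Odd}, once the Fagnano orbit inside the trapezoid is recognized as a non-conical periodic geodesic of odd period with no other closed geodesic of the same length. The three conditions to check are thus $(i)$ non-conicality, $(ii)$ odd period, and $(iii)$ length-isolation. Condition $(iii)$ is part of the hypothesis of the corollary, so the work reduces to verifying $(i)$ and $(ii)$.

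First I would observe that the Fagnano orbit is, by definition, the orthic triangle of the extended triangle $\widehat T$ of the trapezoid (see Figures \ref{orthic} and \ref{trapezoid}), whose vertices are the feet of the three altitudes of $\widehat T$. When $\widehat T$ is acute and the height $h$ satisfies \eqref{lF exists}, each of these feet lies strictly in the interior of a side of $\widehat T$; inside $T$, those three sides are the base $B$ and the two legs $\ell$ and $\ell'$. Under the hypothesis that the orthic triangle is contained in $T$, the orbit therefore performs exactly three reflections, each at an interior point of an edge of $T$. This yields period $3$, which is odd, and confirms that no vertex of $T$ is met, so the orbit is non-conical. The length $l_F = 2B \sin \alpha \sin \beta$ is the classical perimeter of the orthic triangle of $\widehat T$ with base $B$ and base angles $\alpha,\beta$.

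With $(i)$, $(ii)$, $(iii)$ verified, I would apply Proposition \ref{Odd} with $g$ the Fagnano orbit and $l = l_F$, obtaining
\[ I_{l_F, \mathcal B}(k) = e^{-i k l_F} \sum_{j=0}^\infty c_j(g)\, k^{-j}, \qquad c_0(g) \neq 0. \]
By the definition of the order of a wave trace singularity, this expansion exhibits a singularity of order $0$ at $t = l_F$, which is precisely the conclusion of the corollary.

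The genuinely hard part — showing that an isolated odd-period non-conical orbit in a polygon produces a wave trace singularity of order exactly $0$, with nonvanishing principal symbol — is already absorbed into Proposition \ref{Odd}, whose proof uses propagation of singularities on the ESCS double. The only work needed here is the geometric identification of the Fagnano orbit as an instance of this general setup, which is immediate once the orthic triangle is seen to bounce at interior points of $B$, $\ell$, $\ell'$.
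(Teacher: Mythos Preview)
Your proposal is correct and takes essentially the same approach as the paper: the corollary is stated there as an immediate consequence of Proposition~\ref{Odd} (the paper simply writes ``Consequently, we have''), and your verification that the Fagnano orbit is non-conical of period~$3$ with isolated length is exactly what is implicitly used. One small inaccuracy: you describe the proof of Proposition~\ref{Odd} as using propagation of singularities on the ESCS double, but in the paper that proposition is proved via the Guillemin--Melrose trace formula together with a direct computation showing $\det(I-P_g)=4$ for the linearized Poincar\'e map; this does not affect your argument for the corollary.
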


\begin{proof}
We shall use a more general result of Guillemin-Melrose \cite{gm79}. In fact, by ~\cite{gm79}*{Theorem 1}, we obtain the proposition immediately, but we need to check that the orbit is isolated and is non-degenerate.  This means that we must verify that the linearized Poincar\'e map $P_{g}$ has no eigenvalue one, or equivalently $ \det (I - P_{g}) \neq 0$. In fact we show that
\begin{equation} \label{det}\det (I - P_{g}) =4. \end{equation}
The Poincar\'e map of a closed geodesic $g_0$ {of period $n$} is defined as follows. Let $x_0$ be a point of reflection of $g_0$ on an edge $AB$, and $\theta_0$ be the angle that $g_0$ makes with $AB$ in the counterclockwise direction. Now for $(x, \theta)$ near $(x_0, \theta_0)$ we define $f(x, \theta)$ to be the point $(x', \theta')$ in the phase space of the boundary of $T$ that is obtained by following the trajectory $g$ that starts at point $(x, \theta)$ and reflects precisely $n$ times on the boundary. In other words, $f$ is the $n$-th iterate of the billiard map. The linearized Poincar\'e map $P_g$ is the linearization (Jacobian) of $f$ at $(x_0, \theta_0)$. To calculate $P_{g_0}$ we first unfold the trapezoid along the geodesic $g$ as in Figure \ref{poincare}. The top edge $BA$ is obtained from the bottom edge $AB$ after $n$ reflections along the impact edges of $g$.

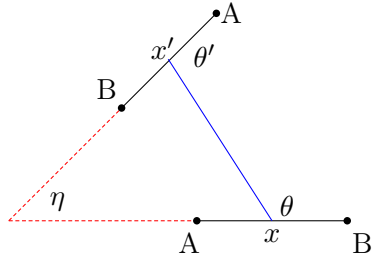
\begin{figure}[H]
	\begin{tikzpicture}
	\draw (0,0)-- (2,0) ; 
	\draw (-1,1.5)-- (0.29, 2.79) ; 
	\draw [blue] (1,0)--(-0.38, 2.15);
	
	\node at (-0.1, -0.3) [align=center]{A};
	\node at (2.2, -0.3) [align=center]{B}; 
	\node at (-1.2, 1.75) [align=center]{B}; 
	\node at (0.46, 2.76) [align=center]{A};
	\node at (1, -.2) [align=center]{$x$};
	\node at (1.2, 0.2) [align=center]{$\theta$};
	\node at (-0.45, 2.3) [align=center]{$x'$};
	\node at (0.1, 2.2) [align=center]{$\theta'$};
	\node at (-1.85, 0.25) [align=center]{$\eta$};
	
	\draw[dash pattern=on 1.5pt off 1pt on 1.5pt off 1pt, red] (-2.5,0) -- (0, 0)   ; 
	\draw[dash pattern=on 1.5pt off 1pt on 1.5pt off 1pt, red] (-2.5,0) -- (-1, 1.5)   ;

	\node at (0,0) [circle,fill,inner sep=1pt] {};
	\node at (2,0) [circle,fill,inner sep=1pt] {};
	\node at (-1,1.5) [circle,fill,inner sep=1pt] {};
	\node at (0.26,2.76) [circle,fill,inner sep=1pt] {};
		\end{tikzpicture} 
	\vspace{.21in}
	\centering
	\caption{The Poincar\'e map}  \label{poincare} 
\end{figure}
 It is clear from Figure \ref{poincare} that
$$ x'(x, \theta_0) = 2x_0 -x, \qquad \theta'(x, \theta)= \pi +\eta - \theta= 2 \theta_0 - \theta.$$
Hence,
\begin{equation*}
P_{g_0} = 
\begin{bmatrix}
\frac{\partial x'}{\partial x} & \frac{ \partial x'}{\partial \theta}  \\[1ex]
\frac{\partial \theta'}{\partial x}  &\frac{\partial \theta'}{\partial \theta}  \\
\end{bmatrix} = \begin{bmatrix}
-1 & \frac{ \partial x'}{\partial \theta}  \\[.5ex]
0  &-1  \\
\end{bmatrix} ,
\end{equation*}
and the claim \eqref{det} follows. 
\end{proof}

Next we study the singularity associated to the height $2 h_\alpha$. This was largely done by Durso \cite{durso}. The results of \cite{durso} concern triangles but since the height $h_\alpha$ does not visit the top edge of the trapezoid, they also apply to trapezoids.  
\begin{prop} [\cite{durso}] \label{h alpha prop}
	Let $T$ be a trapezoid (or a triangle) that is not isosceles.  Suppose the height $h_\alpha$ lies inside $T$, and there are no other closed geodesics in $T$ of length $2h_\alpha$.
	\begin{enumerate}
	\item If $\alpha$ is diffractive, i.e. $\alpha \neq \frac{\pi}{N}$, $N = 2, 3, 4$, then
	$$I_{2h, \mathcal{B}}(k) = c_0(h_\alpha) e^{-2ik h_\alpha}  k^{-\frac12} + O(k^{-\frac32}),$$
	where $c_0(h_\alpha) \neq 0$. In particular the order of the singularity at $t =2 h_\alpha$ is $- \frac12$. 
	\item If $\alpha = \frac{\pi}{2}$, then
	$$ I_{l, \mathcal{B}}(k) = e^{-2ik h _\alpha} \sum_{j=0}^\infty d_j(h_\alpha) k^{-j},$$
	with $d_0(h_\alpha) \neq 0$. Thus in this case the order of the singularity is $0$.
	\end{enumerate}
	\end{prop}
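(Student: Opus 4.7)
The plan is to reduce directly to the triangle case carried out by Durso \cite{durso}, via a localisation argument that eliminates the short parallel edge $b$ from the picture.

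Because the altitude $h_\alpha$ lives in the extended triangle $\widehat T$ and never reaches the top edge $b$, I would choose the cutoff $\hat\rho$ with support in an interval around $2h_\alpha$ small enough that, by finite propagation speed together with the propagation of diffractive singularities proved by Hillairet \cite{Hil1} and Wunsch \cite{Wu}, every trajectory contributing to $I_{2h_\alpha,\mathcal B}(k)$ lies inside a tubular neighborhood of the orbit $2h_\alpha$ and hence inside $\widehat T$. Consequently, modulo a rapidly decaying error in $k$, the singularity of $w_T^{\mathcal B}(t)$ at $t=2h_\alpha$ agrees with that of $w_{\widehat T}^{\mathcal B}(t)$, so Durso's computation for triangles applies essentially verbatim.

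For part (1), the hypothesis $\alpha \neq \pi/N$ for $N=2,3,4$, combined with the assumption $h_\alpha \subset T$ (which excludes $N \geq 5$, since in that case $\gamma > \pi/2$ and the foot of the altitude leaves $\widehat T$), means that the vertex of angle $\alpha$ is strictly diffractive while the foot on $\ell'$ is a single regular reflection. Inserting this closed diffractive geodesic into the Hillairet--Wunsch parametrix produces one factor of $k^{-1/2}$ from the sole diffraction, giving
\[
c_0(h_\alpha)\, e^{-2ikh_\alpha}\, k^{-1/2} + O(k^{-3/2}),
\]
with $c_0(h_\alpha)$ proportional to the Keller--Sommerfeld diffraction coefficient at angle $\alpha$ evaluated on the incoming/outgoing altitudinal directions (and carrying the sign corresponding to the boundary condition on $\ell'$). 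For part (2), when $\alpha = \pi/2$, the vertex satisfies $\alpha = \pi/N$ with $N=2$ and is non-diffractive: unfolding across the leg $\ell$ turns this vertex into a smooth interior point and lifts $2h_\alpha$ to a non-conical bouncing-ball-type closed geodesic that meets both $\ell'$ and its reflected image perpendicularly. This lifted orbit is isolated by hypothesis and non-degenerate (its linearized Poincar\'e map has no unit eigenvalue by the same computation that yielded \eqref{det}), so the classical Duistermaat--Guillemin trace formula \cite{gm79} supplies a full asymptotic expansion $e^{-2ikh_\alpha}\sum_{j\geq 0} d_j(h_\alpha) k^{-j}$ with $d_0(h_\alpha) \neq 0$, placing the singularity at order $0$.

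The main obstacle is the non-vanishing of the principal diffraction amplitude $c_0(h_\alpha)$ in part (1). Fortunately this reduces to the non-vanishing of the explicit Euclidean-cone diffraction coefficient at an angle $\alpha \notin \pi/\mathbb{N}$, which is exactly the algebraic computation carried out in \cite{durso} and which we may simply cite. A secondary, routine task is to check that the doubling/unfolding in part (2) genuinely produces a smooth point and an isolated non-degenerate orbit; this is immediate from the geometry of the right-angled vertex and the fact that $\ell'$ and its reflection across $\ell$ are not parallel when $\beta < \pi/2$.
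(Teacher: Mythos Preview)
Your overall strategy---microlocalize near $2h_\alpha$ and reduce to Durso's triangle computation, using that $h_\alpha$ never meets the top edge $b$---is exactly what the paper does (it offers no independent proof, only this reduction and a citation). Your treatment of part (1) is sound.

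Your argument for part (2), however, has a genuine gap. A single reflection across $\ell$ turns the right-angle vertex into a smooth \emph{boundary} point on the straightened base $B\cup B'$, not an interior point, and the lifted orbit then has period four (it hits $\ell'$, the base at the former vertex, $\ell''$, and the base again). More seriously, this lifted orbit is \emph{not} isolated in $T\cup T_\ell$: from any base point $(x_0,0)$ the rays in directions $(\pm\sin\beta,\cos\beta)$ meet $\ell'$ and $\ell''$ perpendicularly, producing a one-parameter family of period-four orbits all of length $4h_\alpha$. So neither isolation nor the odd-period computation behind \eqref{det} applies, and Guillemin--Melrose on $T\cup T_\ell$ would give the wrong order. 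The passage from the wave trace on $T$ to that on $T\cup T_\ell$ is also not as direct as your sketch suggests, since the Dirichlet spectrum of $T$ corresponds only to the odd part of the spectrum of $T\cup T_\ell$.

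The simplest repair is to drop the unfolding and cite Durso directly for the $\alpha=\pi/2$ case, as the paper does; alternatively one can view $2h_\alpha$ as the degenerate limit of the isolated Fagnano orbit (the paper's remark after the proposition) and argue by continuity of the trace contribution.
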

See Remark \ref{c11trace} for the isosceles case. 
\begin{remark}
	Two comments are in order. Since in the cases $\alpha = \frac{\pi}{3},  \frac{\pi}{4}$, we have $h < h_\alpha$, we will not need to investigate the singularity type of $h_\alpha$. Also we note that the different singularity behavior in the non-diffractive case $\alpha =\frac{\pi}{2}$ above is not surprising, as it can be understood as the limit of Fagnano orbits collapsed into a bouncing ball as $\alpha \to \frac{\pi}{2}^-$.   
	\end{remark}

Finally we investigate the contributions of the $2h$ family in arbitrary trapezoids, and the $2h_\alpha$ family in isosceles trapezoids. 

\begin{prop} [\cite{hlrn}] \label{height}
Let $T$ be a trapezoid.  Suppose there are no other closed geodesics in $T$ of length $2h$. Then the frequency domain contribution of the $2h$ family is given by
$$I_{2h, \mathcal{B}}(k) = \frac{e^{i\pi/4}e^{-2ihk} }{\sqrt{4 \pi h}} A(R) k^\frac12+ o(k^\frac12).$$
where $A(R)=hb$ is the area of the inner rectangle of $T$. In particular, the order of the wave trace singularity is $\frac12$. 
\end{prop}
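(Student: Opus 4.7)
The plan is to reduce the computation near $t=2h$ to a calculation on an infinite strip via the method of images, extract the leading $k^{1/2}$ contribution by stationary phase, and then control the orbits near the boundary of the $2h$ family using sharp diffractive wave-trace estimates. This is the strategy carried out in \cite{hlrn}; I sketch the main steps here.

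First, let $R\subset T$ denote the inner rectangle of dimensions $b\times h$ directly under the top edge $b$, so $A(R)=hb$. Choose a smooth cutoff $\chi$ supported in a slight enlargement of $R$ with $\chi\equiv 1$ on a retraction $R_\e\subset R$. For $x\in R_\e$, the only closed geodesics through $x$ of length close to $2h$ are members of the $2h$ family, and these stay inside $R$ without meeting either slanted leg. By finite propagation speed, the Schwartz kernel of $\cos(t\sqrt{\Delta^{\cB}_T})$ restricted to the diagonal over $R_\e$ for $|t-2h|<\delta$ agrees, modulo smoothing, with the corresponding kernel on the infinite strip $\R\times[0,h]$ with the same boundary condition.

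On the strip, the method of images expresses the wave kernel as a sum $\sum_n(\pm 1)^n E_0(t,x,x_n)$ over image points $x_n$ obtained by reflecting $x$ across the two horizontal sides, where $E_0$ is the free planar wave kernel. The only term producing a singularity at $t=2h$ is the one with $|x-x_n|=2h$, corresponding to one reflection off each parallel side. Substituting the 2D Hadamard form of $E_0$, integrating the diagonal trace density against $\chi$, and applying inverse Fourier transform in $t$ with stationary phase at $t=2h$ yields
\[
I_{2h,\cB}(k)=\frac{e^{i\pi/4}e^{-2ihk}}{\sqrt{4\pi h}}\,k^{1/2}\int_T\chi(x)\,dx+o(k^{1/2}).
\]
The double-reflection sign $(\pm 1)^2=1$ eliminates any Dirichlet/Neumann dependence in the leading coefficient, and letting $\e\downarrow 0$ so that $\int\chi\to A(R)=hb$ recovers the stated formula.

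The main obstacle is controlling the complement $T\setminus R_\e$, where bouncing balls transition to orbits grazing the slanted legs or diffracting at the two upper vertices of the trapezoid. One must show that the contribution of such orbits to $I_{2h,\cB}(k)$ is $o(k^{1/2})$. This is where the analysis becomes delicate: it requires the sharp singularity-order estimates of Hillairet \cite{Hil1} and Wunsch \cite{Wu} for closed diffractive orbits at Euclidean conical points, which bound the contribution of a diffractive closed orbit at $t=2h$ by at most $O(k^{-1/2})$ (in fact better, since the orbits in the boundary layer undergo at least one diffraction at a vertex with angle not of the form $\pi/N$). These estimates were assembled into precisely the form needed for the $2h$ family in \cite{hlrn}, and that reduction is the reason the present proposition is cited rather than re-derived here.
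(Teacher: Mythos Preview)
The paper does not give its own proof of this proposition; it is quoted directly from \cite{hlrn}, and Remark~\ref{c11trace} is the only place where the mechanism behind the proof is mentioned. Your sketch correctly isolates the two ingredients: the leading $k^{1/2}$ term comes from the cylinder of bouncing-ball orbits (with the sign $(\pm 1)^2=1$ explaining why the answer is the same for Dirichlet and Neumann), and the delicate point is the contribution of the boundary of the cylinder, where the orbits become geometrically diffractive at the two upper corners.

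Two places where your account drifts from what \cite{hlrn} actually does. First, the references you invoke for the boundary analysis, \cite{Hil1} and \cite{Wu}, establish the Poisson relation (containment of the singular support) but do not give the sharp order of a geometrically diffractive contribution; the relevant input, as the paper signals in Remark~\ref{c11trace}, is Hillairet's trace formula for Euclidean surfaces with conical singularities \cite{hil}, applied after doubling the trapezoid. That result handles the full cylinder, including its conical boundary circles, in one stroke and outputs exactly the $A(R)k^{1/2}$ coefficient. Second, and relatedly, your method-of-images computation with a cutoff $\chi$ gives $\int_T\chi$ rather than $A(R)$, and the passage ``let $\e\downarrow 0$'' requires the $o(k^{1/2})$ remainder to be uniform in $\e$. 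That uniformity is precisely the content you cannot get from a naive image sum on the strip, and it is what the ESCS machinery of \cite{hil} supplies. So your outline is correct in spirit, but the organization in \cite{hlrn} is closer to ``double to an ESCS and quote \cite{hil}'' than to ``strip plus cutoff plus separate diffractive estimate.''
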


\begin{remark}\label{c11trace} The same result holds for the $C_{1,1}$ family in isosceles trapezoids, if the family is unobstructed as in Figure \ref{c11}, but $h$ must be replaced by $h_\alpha$, and $A(R)$ by half of the area that the $C_{1,1}$ family sweeps. The proof is identical to the proof we provided in \cite{hlrn}, hence we omit it. The key point is that the geometrically diffractive orbits lying on the boundary of the $C_{1,1}$ family each go through only one diffractive corner, and hence the result of \cite{hil} regarding such families on ESCS can be used in the poof of \cite{hlrn}.
\end{remark}

\section{Spectral uniqueness of a trapezoid} \label{s:proof} 

Before we present the proof of our main theorem, let us state some simple facts  (the following five propositions) that will facilitate our argument. We begin by recalling a statement from our previous work \cite{hlrn}, that specifies the length of the shortest closed geodesic in a trapezoid.  Since the proof is quite short, we include it for the convenience of the reader.  

\begin{prop}[\cite{hlrn}] \label{shortest} The length of the shortest closed geodesic in a trapezoid is either  $2h$ or $2b$.
\end{prop}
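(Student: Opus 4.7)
The plan is to prove the lower bound $|\gamma| \ge \min(2h, 2b)$ for every closed (possibly diffractive) geodesic $\gamma \subset T$; together with the fact that both of these lengths are realized (by the bouncing-ball orbit between the parallel sides and by the diffractive orbit along the top edge), this yields the proposition.

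First I would invoke the elementary total-variation inequality. Parameterizing $\gamma$ by arc length, its $y$-coordinate is a continuous, piecewise linear, periodic function with $|y'(t)| \le 1$; hence the total variation of $y$ over one period is at most $|\gamma|$ and at least $2V(\gamma)$, where $V(\gamma) := \max y - \min y$. This gives $|\gamma| \ge 2V(\gamma)$, and an analogous argument gives $|\gamma| \ge 2H(\gamma)$ for the horizontal extent. If $\gamma$ touches both parallel sides of $T$, then $V(\gamma) = h$ and we are immediately done.

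Otherwise $\gamma$ avoids at least one parallel side. The subcase where $\gamma$ avoids the bottom edge is symmetric to --- and easier than, since $B \ge b$ --- the subcase where $\gamma$ avoids the top. In the latter main case $\gamma$ bounces only off the base and the two legs and is therefore a closed orbit of the extended triangle $\widehat T$. I would argue that any such closed orbit either has horizontal extent at least $b$ (so $|\gamma| \ge 2H(\gamma) \ge 2b$) or reduces to a multiple of the diffractive $2b$ orbit itself. For non-conical orbits in $\widehat T$, this can be extracted from Gutkin's cylinder classification combined with Fagnano's theorem on the orthic triangle, yielding the minimum non-conical length $l_F = 2B \sin\alpha \sin\beta$; a direct trigonometric calculation using $\cot\alpha + \cot\beta = \sin(\alpha+\beta)/(\sin\alpha\sin\beta)$ reduces $l_F \ge \min(2h, 2b)$ (valid whenever $\widehat T$ is acute) to the elementary inequality $\sin(\alpha+\beta) + \sin\alpha\sin\beta \ge 1$ in the range $\alpha,\beta \in (0,\pi/2]$ with $\alpha+\beta \ge \pi/2$.

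The main obstacle I anticipate is the treatment of diffractive closed orbits supported in a corner wedge with angle of the form $\pi/N$, which in principle could be short. The non-obtuse hypothesis $\alpha, \beta \le \pi/2$ restricts the admissible values of $N$, and in each remaining case one must verify either that the orbit is (a multiple of) the $2b$ orbit --- so has length at least $2b$ --- or that it lies on the boundary of a Gutkin cylinder of non-conical orbits already controlled by the argument of the previous paragraph. This case analysis is routine but requires careful bookkeeping of the corner wedges compatible with the non-obtuse rigidity.
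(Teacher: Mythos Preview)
Your opening move is exactly the paper's: the variation bounds $|\gamma|\ge 2V(\gamma)$ and $|\gamma|\ge 2H(\gamma)$, together with $V(\gamma)=h$ when $\gamma$ meets both parallel sides. After that, though, you take a detour through Fagnano's theorem that does not close, whereas the paper finishes with one further elementary observation: any closed geodesic touching both legs has $H(\gamma)\ge b$. This holds because, by non-obtuseness, the legs lean inward as one goes up, so a point on the left leg at height $y_1\le h$ and a point on the right leg at height $y_2\le h$ are horizontally at least $B-h\cot\alpha-h\cot\beta=b$ apart. A short wedge argument (a closed billiard orbit cannot live in a single corner sector) then forces every orbit not already covered by the vertical case to meet both legs, or both top corners, hence $|\gamma|\ge 2b$. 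This handles diffractive and non-diffractive orbits uniformly, with no case analysis on the corner angles.

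Your Fagnano route leaves real gaps. It bounds only \emph{non-conical} orbits of $\widehat T$, and only when $\widehat T$ is \emph{acute}; if $\alpha+\beta<\pi/2$ there is no orthic triangle and you have no lower bound at all for this branch. Your claim that the subcase ``$\gamma$ avoids the bottom edge'' is symmetric to ``$\gamma$ avoids the top'' is not correct: the top edge and the two legs do not enclose a triangle (the legs diverge downward), so there is no analogue of $\widehat T$ to invoke---and this is in fact the subcase containing the $2b$ orbit, which you placed in the wrong branch. Finally, your last paragraph has the terminology reversed (diffractive corners are those with angle \emph{not} of the form $\pi/N$), and the ``routine bookkeeping'' you defer there is precisely what the direct horizontal-extent argument above renders unnecessary.
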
 

\begin{proof} 
	Any closed diffractive or non-diffractive geodesic that starts from the top edge (including the corners) and is transversal (i.e.
	not tangent) to the top edge must be of length strictly larger than $2h$ unless the closed geodesic also runs between the two parallel sides and is a member of the $2h$ family. Furthermore, any closed geodesic that touches the left
	and right edges (including the corners) must be of length larger than $2b$ unless it is the $2b$ orbit. If a geodesic touches the bottom edge and the right edge (respectively, left edge), then it must also visit the top edge or the
	left edge (respectively, right edge) and hence its length is larger than $2h$ or
	$2b$.
\end{proof} 

\begin{prop} \label{2h or Fagnano} If we exclude the lengths of periodic orbits that lie entirely on the top edge of a trapezoid $T$ from its length spectrum, then the shortest periodic orbit is the $2h$ family or the Fagnano orbit $l_F$. 
	\end{prop}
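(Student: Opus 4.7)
The plan is to show that any closed geodesic $\gamma$ in $T$ not contained entirely in the top edge satisfies $|\gamma|\geq \min\{2h, l_F\}$, with the convention $l_F=+\infty$ when the Fagnano orbit is not realised inside $T$. Since the $2h$ family and, when it exists, the Fagnano orbit are themselves such orbits, this bound forces the shortest closed geodesic (after excluding those on the top edge) to be one of them.

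First I would split according to whether $\gamma$ meets the top edge. If $\gamma$ touches the top edge but is not contained in it, it must leave the top edge transversally at some reflection or diffraction point; the argument in the proof of Proposition~\ref{shortest} then applies verbatim to give $|\gamma|\geq 2h$, with equality only when $\gamma$ belongs to the $2h$ family. If instead $\gamma$ avoids the top edge altogether, then $\gamma$ is a closed billiard orbit of the extended triangle $\widehat T$ of Figure~\ref{trapezoid}, lying strictly below the horizontal line at height $h$. At this point I would invoke Fagnano's classical minimum-perimeter theorem together with its extension to all closed billiard orbits in an acute triangle: when $\widehat T$ is acute, i.e.\ $\alpha+\beta>\pi/2$, the shortest closed billiard in $\widehat T$ is the orthic (Fagnano) triangle, of length $l_F=2B\sin\alpha\sin\beta$, so $|\gamma|\geq l_F$. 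When $\widehat T$ is right or obtuse at its top vertex, i.e.\ $\alpha+\beta\leq\pi/2$, Fagnano's degenerate minimum equals twice the altitude from that vertex, namely $2H=2B\sin\alpha\sin\beta/\sin(\alpha+\beta)$; combined with the trapezoid identity $h=(B-b)\sin\alpha\sin\beta/\sin(\alpha+\beta)\leq H$, this still yields $|\gamma|\geq 2H\geq 2h$.

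Putting the two cases together gives $|\gamma|\geq \min\{2h,l_F\}$ in all situations, which is precisely the conclusion of the proposition. The main obstacle I anticipate is the careful justification of the Fagnano-type lower bound in the second case, particularly for periodic orbits of period strictly greater than three and in the non-acute configuration of $\widehat T$, where Fagnano's inscribed-triangle statement does not directly cover every periodic billiard. I would handle this by the standard unfolding construction, in which a closed billiard of $\widehat T$ becomes a straight segment in the reflected plane connecting congruent copies of a reference point, whose length is then estimated from below by the perimeter of a suitable inscribed polygon and hence, by Fagnano's inequality, by $l_F$ in the acute case and by $2H$ in the right/obtuse case.
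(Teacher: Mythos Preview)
Your overall strategy---split according to whether $\gamma$ meets the top edge, and for orbits avoiding it reduce to Fagnano/Durso on the extended triangle $\widehat T$---is exactly the paper's approach. However, your case analysis has a genuine gap, and it is \emph{not} the one you flag at the end.

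The missing case is this: $\widehat T$ is acute (so the Fagnano triangle exists in $\widehat T$), but the Fagnano triangle does \emph{not} lie inside $T$, because the trapezoid is too short (equivalently, $h\le B\sin\beta\cos\beta$ by \eqref{lF exists}). In your acute sub-case you correctly derive $|\gamma|\ge 2B\sin\alpha\sin\beta$ for any closed orbit avoiding the top edge. But your convention sets $l_F=+\infty$ here, so your stated conclusion $|\gamma|\ge\min\{2h,l_F\}=2h$ does \emph{not} follow from what you have shown: the bound $|\gamma|\ge 2B\sin\alpha\sin\beta$ is a bound by a length that is not realised as a closed geodesic in $T$, and you have not compared it to $2h$. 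If it happened that $2B\sin\alpha\sin\beta<2h$, the shortest orbit in $T$ avoiding the top edge could be strictly shorter than $2h$ without being the Fagnano orbit.

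The paper closes this gap with a one-line computation you omit: when $\widehat T$ is acute one has $\alpha+\beta\ge\pi/2$, hence $\sin\alpha\ge\cos\beta$, and the condition $h\le B\sin\beta\cos\beta$ then forces $2h\le 2B\sin\alpha\sin\beta$. Thus in this case too $|\gamma|\ge 2h$, and the $2h$ family is the minimiser. Once you add this inequality, your argument coincides with the paper's. The obstacle you anticipate (extending Fagnano's inscribed-triangle minimum to all periodic billiards) is handled in the paper simply by citing Durso~\cite{durso}, so you need not reprove it via unfolding.
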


\begin{figure} [H]
	\begin{minipage}{.35\textwidth}
		\begin{tikzpicture}[scale=0.45]
	\draw[thick ] (0,0) -- (6,0)  -- (2.833,3.6)  -- (1.2,3.6) --(0,0) ; 
	\draw[very thick,blue! 70] (3.5,2.85) -- (1.8,0)--(.67,1.93)--(3.5,2.85)   ; 
	\end{tikzpicture} 
		\centering
		\caption{ \\ $l_F$ orbit exists inside $T$.}
		\label{lF inside}
	\end{minipage}%
\begin{minipage}{.35\textwidth}
		\begin{tikzpicture}[scale=0.45]
		\draw[thick ] (0,0) -- (6,0)  -- (3.5,2.85)  -- (0.95 ,2.85) --(0,0) ; 
		\draw[very thick,blue! 70] (3.5,2.85) -- (1.8,0)--(.67,1.93)--(3.5,2.85)   ; 
		\draw  node at(3.5, 2.85) [draw=red,line width=0.1pt, circle,fill=blue!50,inner sep=0pt,minimum size=2pt,line width=1pt] {} ;  
		\end{tikzpicture} 
		\centering
		\caption{\\ $l_F$ orbit is {diffractive.}
		} 
		\label{lF diffracted}
	\end{minipage}%
\begin{minipage}{.35\textwidth}
	\begin{tikzpicture}[scale=0.45]
	\draw[thick ] (0,0) -- (6,0)  -- (3.95,2.3)  -- (0.75 ,2.3) --(0,0) ; 
	\draw[dash pattern=on 1.5pt off 1pt on 1.5pt off 1pt] (0,0) -- (6,0)  -- (1.66, 4.9) --(0,0) ; 
	\draw[thick, dash pattern=on 2pt off 1pt on 2pt off 1pt, blue] (3.5,2.85) -- (1.8,0)--(.67,1.93)--(3.5,2.85)   ; 
	\end{tikzpicture} 
	\centering
	\caption{ \\ $l_F$ does not lie in $T$. } 
	\label{lF outside}
\end{minipage}
	\end{figure}

\begin{proof} Since $2h$ is the shortest orbit, other than the $2mb$ orbits, that touches the the top edge, the proposition follows quickly from the following two claims.
	\begin{enumerate}
		\item If the $l_F$ orbit lies inside a trapezoid, then it must be the shortest geodesic that does not touch the top edge of the trapezoid.
		\item  If the $l_F$ orbit does not exist in the trapezoid or if it goes through a diffraction as in Figures \ref{lF diffracted} and \ref{lF outside}, then the $2h$ family is the shortest periodic orbit among all periodic orbits except possibly some $2mb$ orbits. 
	\end{enumerate}
 Note that if an orbit does not visit the top edge of the trapezoid then it must be an orbit of the extended triangle $\hat{T}$. Hence, the first claim follows from the classical result of Fagnano (see also \cite{durso}). For the second statement, we note that there are two cases. Either the $l_F$ orbit does not even exist in the extended triangle $\hat{T}$, or it exists in $\hat{T}$ however the trapezoid $T$ is short enough that $l_F$ does not exists in $T$ or it is diffractive as in Figures \ref{lF diffracted} and \ref{lF outside} . In the first case, the proposition follows from Durso \cite{durso}. By \eqref{lF exists}, the second case happens only if 
	\[ h \leq B \sin \beta \cos \beta. \] 
	However, since in this case $\alpha+ \beta \geq \frac{\pi}{2}$, we have $\alpha  \geq \frac{\pi}{2} - \beta$, and thus $\sin \alpha \geq \cos \beta$. Applying this to the above inequality, we obtain 
	$$ 2h \leq 2B \sin \alpha  \sin \beta = l_F.$$
	\end{proof}
 \begin{prop} \label{h alpha < lF} Let $T$ be a trapezoid. Suppose the $l_F$ orbit exists inside $T$. Then
 	$2h_\alpha < 2l_F. $
 \end{prop}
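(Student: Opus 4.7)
My plan is to reduce the inequality to a trigonometric statement about $\alpha$ and then use the hypothesis that $l_F$ exists inside $T$ to pin down the necessary constraint on $\alpha$.

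First I would recall the closed-form expressions derived in Section \ref{s: closed geodesics}: we have $l_F = 2B\sin\alpha\sin\beta$ and $2h_\alpha = 2B\sin\beta$. Plugging these in, the inequality $2h_\alpha < 2l_F$ becomes $2B\sin\beta < 4B\sin\alpha\sin\beta$. Since $\beta \in (0,\pi/2]$ so $\sin\beta > 0$, we may divide through, and the inequality is equivalent to $\sin\alpha > 1/2$, i.e.\ $\alpha > \pi/6$.

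Next I would use the hypothesis. For the Fagnano orbit $l_F$ to exist inside $T$, it must in particular exist inside the extended triangle $\widehat T$, and this requires $\widehat T$ to be acute (Fagnano's classical theorem). Thus the apex angle $\gamma = \pi - \alpha - \beta$ satisfies $\gamma < \pi/2$, which rearranges to $\alpha + \beta > \pi/2$. Combining this with the standing convention $\beta \leq \alpha$ from the definition of a non-obtuse trapezoid gives $2\alpha \geq \alpha + \beta > \pi/2$, hence $\alpha > \pi/4$. In particular $\sin\alpha > \sin(\pi/4) = \sqrt{2}/2 > 1/2$, which is exactly the trigonometric inequality identified in the first step, completing the proof.

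There is no real obstacle here; the statement is an elementary consequence of the acuteness condition for Fagnano orbits combined with the ordering $\beta \leq \alpha$. The only thing to be careful about is using the hypothesis in the correct direction (existence of $l_F$ inside $T$ is stronger than acuteness of $\widehat T$, so we only need the weaker consequence) and noting that the inequality $\sin\alpha > 1/2$ is comfortably satisfied, with the bound $\alpha > \pi/4$ giving us considerable room to spare.
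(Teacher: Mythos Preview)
Your proof is correct and essentially identical to the paper's own argument: both reduce the inequality to $2\sin\alpha>1$ via the formulas $2h_\alpha=2B\sin\beta$ and $l_F=2B\sin\alpha\sin\beta$, and both obtain $\alpha\geq\pi/4$ from the acuteness condition $\alpha+\beta\geq\pi/2$ together with $\beta\leq\alpha$. The only cosmetic difference is that you carry strict inequalities throughout while the paper uses a non-strict $\alpha+\beta\geq\pi/2$; either way $\sin\alpha\geq\sqrt{2}/2>1/2$ gives the desired strict conclusion.
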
 
 \begin{proof}  Since the $l_F$ orbit exists, we must have $\alpha + \beta \geq \frac{\pi}{2}$, in particular $\alpha \geq \frac{\pi}{4}$. It is then obvious that
 	\[2h_\alpha = 2B \sin \beta < 4 B \sin \alpha \sin \beta = 2l_F. \] 
 \end{proof} 
The next statement provides a useful lower bound for the length of conical periodic orbits and hence for  families of periodic orbits. 
\begin{prop} \label{conical orbits}  Any conical period orbit inside a non-rectangular trapezoid $T$ that is not a $2mb$ orbit, has length $ \geq 2h$ or $2h_\alpha$, and equality occurs if and only if the orbit belongs to the $2h$ family, or if it is the $2h_\alpha$ orbit or belongs to the $C_{1,1}$ family when $T$ is isosceles, respectively.  In particular, any family of periodic orbits has length $\geq 2h$ or $2h_\alpha$. 
\end{prop}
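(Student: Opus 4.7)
The plan is to consider an arbitrary conical periodic orbit $\gamma$ in $T$ that is not a $2mb$ orbit, and to split into two cases according to whether or not $\gamma$ visits the top edge of $T$ (including its two top vertices).

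\emph{Case A: $\gamma$ visits the top edge.} The hypothesis that $\gamma$ is not a $2mb$ orbit means $\gamma$ is not confined to the top edge, so it must leave transversally into the interior at some visit. The argument of Proposition~\ref{shortest} then applies verbatim to give $|\gamma|\geq 2h$, with equality precisely for perpendicular bouncing balls between the two parallel sides, i.e.\ members of the $2h$ family.

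\emph{Case B: $\gamma$ avoids the top edge entirely.} Then $\gamma$ lies in $\hat T$ and visits at least one bottom vertex $v\in\{\alpha,\beta\}$. I will show $|\gamma|\geq 2h_\alpha$, using $h_\alpha\leq h_\beta$ (from $\beta\leq\alpha\leq\pi/2$, $h_\alpha=B\sin\beta$, and $h_\beta=B\sin\alpha$). There are three mutually exhaustive sub-possibilities. (i) $\gamma$ meets the side of $\hat T$ opposite to $v$ at some point $Q$, whence $|\gamma|\geq 2|v-Q|\geq 2h_v\geq 2h_\alpha$ via the triangle inequality (every point on the opposite segment has Euclidean distance at least $h_v$ from $v$; this reduces to the non-negativity of a perfect square). (ii) $\gamma$ also visits the other bottom vertex, forcing $|\gamma|\geq 2|\alpha-\beta|=2B\geq 2h_\alpha$ since $h_\alpha=B\sin\beta\leq B$. (iii) $\gamma$ reflects only off the two sides meeting at $v$ and diffracts only at $v$. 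Sub-case (iii) is ruled out by an unfolding argument: both reflecting sides pass through $v$, so every reflection in the unfolding fixes $v$, and the unfolded straight line corresponding to any segment between successive diffractions at $v$ starts and ends at the same point, forcing zero length and a contradiction. For the equality analysis, $|\gamma|=2h_\alpha$ forces $v=\alpha$ and $\gamma$ to be the out-and-back from $\alpha$ perpendicular to $\ell'$, i.e.\ the $2h_\alpha$ orbit. In the isosceles case $\alpha=\beta$, Definition~\ref{cmn} places this orbit on the boundary of the parallel cylinder $C_{1,1}$, whose members all share length $2h_\alpha$, so equality is also realized by members of $C_{1,1}$.

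The concluding ``in particular'' clause follows at once from Gutkin's classification recalled at the beginning of Section~\ref{s: closed geodesics}: every maximal family of periodic orbits has a conical geodesic of the same length on its boundary, so the bound $\min(2h,2h_\alpha)$ for conical orbits automatically transfers to families.

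The main obstacle is the rigorous treatment of sub-case (iii). While the unfolding idea is clean, one must be careful about orbits that diffract multiple times at $v$ within one period while touching only the two sides adjacent to $v$; the cleanest way to dispose of them is to observe that the subgroup of plane isometries generated by reflections in the two sides through $v$ fixes $v$ pointwise, so no nontrivial unfolded geodesic from $v$ to a copy of $v$ can exist.
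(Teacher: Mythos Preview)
Your proof is correct and follows essentially the same case split as the paper's: whether or not the conical orbit visits the top edge. The paper's version is considerably terser --- it dispatches your entire Case~B in one sentence (``clearly\ldots its length is at least $2h_\alpha$'') without your sub-case analysis (i)--(iii) or the unfolding argument ruling out (iii) --- so your write-up is in fact more carefully justified than the published proof.
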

\begin{proof} Clearly if a conical periodic orbit goes through one of the bottom vertices and does not touch the top edge, its length is at least $2h_\alpha$. If it passes through a bottom vertex and touches the top edge, it is longer than or equal to $2h$.  If it goes through one of the top vertices it is either a $2mb$ orbit or it must be transversal to the top edge and consequently be at least $2h$ long.  The equality cases are all obvious. The second statement follows immediately because any boundary circle of a family of periodic orbits is conical. 
\end{proof}

\begin{prop}  \label{same h lF} If two trapezoids are isospectral and have the same height, then they are isometric.  If two trapezoids are isospectral and have the same $\ell_F$ and $h_\alpha$, then they are the same. 
\end{prop}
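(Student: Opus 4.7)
The plan is to use the heat trace invariants of Proposition \ref{qprop} together with the wave trace singularity analysis of Section \ref{s: closed geodesics} to pin down the four parameters $(B, b, \alpha, \beta)$ in each case.

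\emph{Case 1 (same height $h$).} From the heat trace we know the area $A$ and perimeter $L$, so $B+b = 2A/h$ and $\ell + \ell' = L - (B+b) = h(\csc\alpha + \csc\beta)$; in particular $\csc\alpha + \csc\beta$ is determined. Next, Proposition \ref{height} shows that the wave trace singularity at $t = 2h$ has order $\tfrac12$ with leading coefficient proportional to $hb$. Any other closed geodesic potentially of length $2h$ must either be an iterate of the $2b$ orbit (singularity of order at most $-m/2$ by Proposition \ref{2mb}), an odd-period isolated orbit such as $\ell_F$ (order $0$ by Proposition \ref{Odd}), or a diffractive height (Proposition \ref{h alpha prop}); none of these can contribute at order $\tfrac12$. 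By Proposition \ref{conical orbits} no competing family of periodic orbits of length $2h$ exists. Therefore the $k^{1/2}$-coefficient of the $t=2h$ singularity is a genuine spectral invariant that recovers $hb$, hence $b$, and consequently $B$ and $\cot\alpha + \cot\beta = (B-b)/h$.

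It remains to recover $(\alpha, \beta)$ from the two known quantities $s := \csc\alpha + \csc\beta$ and $c := \cot\alpha + \cot\beta$. Using the half-angle identities $\csc\theta + \cot\theta = \cot(\theta/2)$ and $\csc\theta - \cot\theta = \tan(\theta/2)$, setting $u = \tan(\alpha/2)$ and $v = \tan(\beta/2)$ gives $s - c = u + v$ and $s + c = \tfrac{1}{u} + \tfrac{1}{v} = \tfrac{u+v}{uv}$, so both $u + v$ and $uv = (s-c)/(s+c)$ are determined. Hence $\{u, v\}$ is the root set of a known monic quadratic, and the normalization $\beta \leq \alpha$ selects the ordered pair $(u,v)$ uniquely. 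This recovers $(\alpha, \beta)$, and combined with the already known $B$ and $b$ this determines the trapezoid up to a rigid motion.

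\emph{Case 2 (same $\ell_F$ and $h_\alpha$).} Since $\ell_F = 2B\sin\alpha\sin\beta$ and $h_\alpha = B\sin\beta$, the ratio $\ell_F/(2h_\alpha) = \sin\alpha$ determines $\alpha \in (0, \pi/2]$. From the angle invariant $q = F(\alpha) + F(\beta)$ we then obtain $F(\beta) = q - F(\alpha)$, and since $F(x) = 1/(x(\pi-x))$ is strictly decreasing on $(0, \pi/2]$, this recovers $\beta$. Then $B = h_\alpha/\sin\beta$ is known, and substituting $b = B - h(\cot\alpha + \cot\beta)$ into $2A = h(B+b)$ yields the quadratic $(\cot\alpha+\cot\beta)\,h^2 - 2Bh + 2A = 0$. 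Its two roots are symmetric about the extended-triangle height $B/(\cot\alpha+\cot\beta)$, and the constraint $b > 0$ forces $h$ to be the \emph{smaller} root, which pins down $h$ (and thus $b$) uniquely. The main obstacle throughout is the Case 1 step of extracting $hb$ cleanly from the $t=2h$ singularity; this is precisely what Propositions \ref{2mb}--\ref{height} were designed to enable, so the execution reduces to citing those results rather than computing.
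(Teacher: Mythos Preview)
Your Case~2 argument is correct and close in spirit to the paper's, with one cosmetic difference: you use the area to obtain a quadratic in $h$ and then argue that only the smaller root gives $b>0$, whereas the paper uses the perimeter, which (after substituting $b_i=B-h_i(\cot\alpha+\cot\beta)$) gives the linear relation
\[
h_1\bigl(\csc\alpha+\csc\beta-\cot\alpha-\cot\beta\bigr)=h_2\bigl(\csc\alpha+\csc\beta-\cot\alpha-\cot\beta\bigr),
\]
and concludes $h_1=h_2$ directly since $\csc\theta>\cot\theta$ on $(0,\pi/2]$. Either works.

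Case~1, however, is both overcomplicated and has a genuine gap. The paper's proof of the first assertion is a one--line appeal to \cite{sos}: the heat invariants $A$, $L$, $q$ together with $h$ already determine the trapezoid. You never invoke the angle invariant $q$ in Case~1, and it is precisely $q$ that makes the argument elementary and purely algebraic. Instead you try to recover $hb$ from the leading $k^{1/2}$--coefficient of the wave trace at $t=2h$, and this step is not justified by the results you cite. Proposition~\ref{height} is stated under the hypothesis that \emph{no other closed geodesic has length $2h$}; to salvage the conclusion you must show that no other \emph{family} of periodic orbits has length $2h$, since such a family would also contribute at order $k^{1/2}$ and contaminate the coefficient. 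Your appeal to Proposition~\ref{conical orbits} does not establish this: that proposition says a conical orbit not touching the top edge has length $\ge 2h_\alpha$, with equality only for the $2h_\alpha$ orbit or the $C_{1,1}$ family. When $2h_\alpha<2h$ (which can occur, e.g.\ when the apex angle $\gamma$ of the extended triangle is smaller than $\alpha$ and $h$ is large), nothing in Proposition~\ref{conical orbits} prevents a family inside the extended triangle---for instance a $C_{m,n}$ family or the doubled Fagnano band---from having length exactly $2h$. Your enumeration of competing orbits ($2mb$, odd--period isolated, diffractive $h_\alpha$) is also incomplete: arbitrary diffractive conical orbits through the bottom vertices are not covered by Propositions~\ref{2mb}--\ref{h alpha prop}. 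The clean fix is to drop the wave--trace step entirely and use $q$ as the paper does.
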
 

\begin{proof} If two trapezoids are isospectral, then they have the same heat trace invariants.  Consequently they have the same area, perimeter, and angle invariant.  If in addition they have the same height, then it was proved in \cite{sos} that they are isometric.  Now let us assume that two trapezoids have the same $\ell_F$ and $h_\alpha$.   So we obtain that for trapezoids $T_1$ and $T_2$, 
	$$2B_1 \sin \beta_1 = 2 B_2 \sin \beta_2, \quad 2B_1 \sin \alpha_1 \sin \beta_1 = 2 B_2 \sin \alpha_2 \sin \beta_2.$$
	Thus $\alpha_1 = \alpha_2$ and $ \beta_1 = \beta_2$.  This further implies that $B_1=B_2$. 
   Since the trapezoids have the same perimeters,
	$$B_1+b_1 + h_1 (\csc \alpha_1 + \csc \beta_1) = B_1+b_2 + h_2(\csc \alpha_1+ \csc \beta_1). $$
	Using
	$$b_1 = B_1 - h_1(\cot \alpha_1 + \cot \beta_1), \quad b_2 = B_2 - h_2 (\cot \alpha_1 + \cot\beta_1), $$
and
	$$\csc \alpha_1 + \csc \beta_1 > \cot \alpha_1 + \cot\beta_1, $$
we obtain that  $h_1 = h_2.$ We then obtain that $b_1=b_2$, and therefore the trapezoids are isometric.  
\end{proof}

We have now demonstrated everything that we need to give the proof of our main result. 

\begin{proof}[\textbf{Proof of Theorem \ref{th:main}} ] 
	Assume that two non-obtuse trapezoids $T_1$ and $T_2$ are isospectral. We denote the parameters of $T_1$ and $T_2$ by $\alpha_1, \beta_1, b_1, B_1, h_1$ and $\alpha_2, \beta_2, b_2, B_2, h_2$, respectively.  Since by Proposition \ref{p:rect}, rectangles are spectrally unique among trapezoids, we assume that the trapezoids are not rectangles. We begin scanning the positive real line for wave trace singularities. By Proposition \ref{shortest} the first singularity is either  $2b$ or $2h$.  If the order is $\frac12$, then by Propositions \ref{height} and \ref{2mb}  it must be the $2h$ family. Then the trapezoids have the same height, and by Proposition \ref{same h lF}, they must be isometric. If the order of the first singularity is at most $- \frac12$, it must be from the $2b$ orbit. We assume this is the case and move on to the next singularity. By Proposition \ref{2h or Fagnano}, after jumping over singularities of order at most $-\frac{m}{2}$ created by the $2mb$ orbits, we arrive at $t=2h$ or $t=l_F$. If the order is $\frac12$, it must be from the $2h$ family and we are done again by Proposition \ref{same h lF}. So from now on we assume that $l_F(T_1) = l_F(T_2)$. 
		\begin{table} [H]
			\begin{tabular}{| c | c |c | c| c| c|}
				\hline orbit & $2mb$ & $\ell_F$ &  $2h_\alpha$ & $2h$ &  Non-conical with odd order  \\ 
				\hline  order & $\leq -m/2$ & $0$  & \vtop{\hbox{\strut $- \frac12, \,\text{if}\, \alpha \neq \beta$, $\alpha$ \text{is diffractive} }\hbox{\strut $0, \, \; \; \; \text{if}\,    \alpha = \pi/2 $ }\hbox{\strut $\frac12, \, \; \; \; \text{if}\, \alpha = \beta$}} & $ \frac12$& $0$  \\   
				\hline Proposition & \ref{2mb} & \ref{Odd} & \ref{h alpha prop} & \ref{height} & \ref{Odd} \\ \hline  
			\end{tabular}  
			
			\caption{This table summarizes the important periodic orbits for our inverse problem.} 
		\end{table} 
	We then investigate the smallest singularity of order at least $-\frac12$, call it $t_0$, in the open interval $(l_F, 2l_F)$. Note that $t_0$ cannot be $2b$ because in this case $2b < l_F$, and it cannot be $2mb$, $m \geq 2$, because their orders are $  \leq -1$ by Proposition \ref{2mb}.  We know however that $t_0$ must be the length of a conical periodic orbit or of a non-conical one with an odd number of reflections (recall that if the number of reflections is even, then the orbit belongs to a family which always contains a conical orbit on its boundary). By Proposition \ref{conical orbits}, if the singularity $t_0$ is from a conical orbit, it must be caused by the orbit $2h_\alpha$ or $2h$. If it is non-conical with an odd number of reflections, the order of the singularity must be $0$ by Proposition \ref{Odd}. Note that multiple (but only finitely many) isolated non-conical orbits may have the same length, but their singularity contributions can never cancel out or add to become a singularity of order $-\frac12$, which is the order of $2h_\alpha$, or $\frac12$, which is of $2h$ (or $2h_\alpha$ when $T$ is isosceles). This is because their $k$-expansions contain only integer powers of $k$. Note also that, although when $\alpha =\frac{\pi}{2}$, the order of the singularity at $2h_\alpha$ is $0$, but in this case $2h_\alpha = l_F$, so it does not belong to the open interval $(l_F, 2l_F)$. In short, the $k$-expansion of isolated non-conical orbits completely distinguishes them from the $2h_\alpha$ and $2h$ orbits, therefore we skip them if we encounter them.   Hence, the next singularity of nonzero order but at least $-\frac12$, call $t_1$, that occurs in the interval $(l_F, 2l_F)$ must be either from $2h_\alpha$ or $2h$. If the order of $t_1$ is $-\frac12$ we know that it must come from $2h_\alpha$, therefore $2h_{\alpha_1} = 2h_{\alpha_2}$, which implies that $T_1 =T_2$ by Proposition \ref{same h lF}. If the order of $t_1$ is $\frac12$ then one of the following cases happens. 
\begin{enumerate}
	\item 	$2h_1 = 2h_2$. 
	\item $2h_1 = 2h_{\alpha_2}$, and $T_2$ is an isosceles trapezoid, i.e. $\alpha_2 = \beta_2$. 
\end{enumerate}
If case (1) holds, we are done again by Proposition \ref{same h lF}. So suppose case (2) holds.  Since the $2h_1$ singularity of $T_1$ is observed first, this requires that 
	\begin{equation} \label{h1 < h alpha1 } 2h_1 < 2h_{\alpha_1}. \end{equation}
	Since in this case we have
	\[ 2h_1=2h_{\alpha_2} = 2 B_2 \sin \beta_2 = 2B_2 \sin \alpha_2,  \] 
 we obtain from \eqref{h1 < h alpha1 }, that
	\[ B_2 \sin \beta_2 < B_1 \sin \beta_1. \] 
	Consequently, since $\ell_F(T_1) = \ell_{F}(T_2)$, we have 
	\begin{equation*} B_1 \sin \alpha_1 \sin \beta_1 = B_2 \sin \alpha_2 \sin \beta_2 < B_1  \sin \alpha_2 \sin \beta_1. \end{equation*}
	From this we obtain $\sin\alpha_1 < \sin \alpha_2$, which implies  $\alpha_1 < \alpha_2 = \beta_2.$
	However, we also have by the heat trace the same angle invariants $q = F(\alpha)+ F(\beta)$; see Proposition \ref{qprop}. 	Since $F(x) = \frac{1}{x(\pi-x)}$ is a strictly decreasing function on the interval $(0, \frac{\pi}{2}]$, we have
	$$F(\alpha_1) + F(\beta_1) = 2 F(\alpha_2) < 2 F(\alpha_1).$$
	This shows that $F(\beta_1) < F(\alpha_1),$ but $\beta_1 \leq \alpha_1$, which contradicts that $F$ is decreasing.  Therefore, case (2) cannot happen. 
	
	The final case of concern is when no singularities of order $\neq 0$ and at least $-\frac12$ occur in the interval $(l_F, 2l_F)$. Since by Proposition \ref{h alpha < lF}, we have $2h_\alpha < 2l_F$, and since $2h_\alpha  \geq l_F$ with equality only if $\alpha =\frac{\pi}{2}$, this scenario happens only if $\alpha_1= \frac{\pi}{2}$ and $\alpha_2= \frac{\pi}{2}$. But then the angle invariant determines that $\beta_1= \beta_2$, which in turn implies that $T_1= T_2$ using the other heat trace invariants, i.e. the area and perimeter. 
	\end{proof} 

\begin{remark}
	In our proof we never considered the $2h_\alpha$ orbit in the non-diffractive cases $\alpha= \frac{\pi}{3}, \frac{\pi}{4}$. This is because by $\eqref{h < h alpha}$, in these cases $2h < 2h_\alpha$ so one would observe the singularity $2h$ sooner than $2h_\alpha$. We also did not study the obstructed $C_{1,1}$ families for the same reason. 
\end{remark}

\section*{Acknowledgements} 
The first author is supported by the Simons Foundation Collaboration Grant 638398. The second author is supported by NSF grant DMS-19-08513. The third author is supported by the {Swedish Research Council Grant} 2018-03402. The first and third authors are grateful for the support of the National Science Foundation Grant DMS-1440140 and the opportunity to work together at the Mathematical Sciences Research Institute in Berkeley, California during the Fall 2019 semester.


\begin{bibdiv}

\begin{biblist}

	\bib{am}{article}{
		author={Andersson, K. G.},
		author={Melrose, R. B.},
		title={The propagation of singularities along gliding rays},
		journal={Invent. Math.},
		volume={41},
		date={1977},
		number={3},
		pages={197--232},
	}



\bib{vdb}{article}{
   author={van den Berg, M.},
   author={Srisatkunarajah, S.},
   title={Heat equation for a region in ${\bf R}^2$ with a polygonal
   boundary},
   journal={J. London Math. Soc. (2)},
   volume={37},
   date={1988},
   number={1},
   pages={119--127},
}

	\bib{chapman}{article}{author={Chapman, S. J.}, title={Drums that sound the same}, journal={Amer. Math. Monthly}, volume={102}, year={1995}, pages={124--138},}

\bib{dkw}{article}{
	author={de Simoi, Jacopo},
	author={Kaloshin, Vadim},
	author={Wei, Qiaoling},
	title={Dynamical spectral rigidity among $\Bbb Z_2$-symmetric strictly
		convex domains close to a circle},
	note={Appendix B coauthored with H. Hezari},
	journal={Ann. of Math. (2)},
	volume={186},
	date={2017},
	number={1},
	pages={277--314},
}

\bib{dh}{article}{
	author={Datchev, Kiril},
	author={Hezari, Hamid},
	title={Inverse problems in spectral geometry},
	conference={
		title={Inverse problems and applications: inside out. II},
	},
	book={
		series={Math. Sci. Res. Inst. Publ.},
		volume={60},
		publisher={Cambridge Univ. Press, Cambridge},
	},
	date={2013},
	pages={455--485},
}

\bib{durso}{book}{
   author={Durso, Catherine},
title={On the inverse spectral problem for polygonal
domains},note={Thesis (Ph.D.)--Massachusetts Institute of
Technology}, publisher={ProQuest LLC, Ann Arbor, MI},
   date={1988},
   pages={(no paging)},
}






\bib{gww}{article}{
   author={Gordon, Carolyn},
   author={Webb, David L.},
   author={Wolpert, Scott},
   title={One cannot hear the shape of a drum},
   journal={Bull. Amer. Math. Soc. (N.S.)},
   volume={27},
   date={1992},
   number={1},
   pages={134--138},
}

\bib{gww1}{article}{
   author={Gordon, C.},
   author={Webb, D.},
   author={Wolpert, S.},
   title={Isospectral plane domains and surfaces via Riemannian orbifolds},
   journal={Invent. Math.},
   volume={110},
   date={1992},
   number={1},
   pages={1--22},
}

\bib{grma}{article}{
   author={Grieser, Daniel},
   author={Maronna, Svenja},
   title={Hearing the shape of a triangle},
   journal={Notices Amer. Math. Soc.},
   volume={60},
   date={2013},
   number={11},
   pages={1440--1447},

}

\bib{gm79}{article}{
author={Guillemin, Victor},
   author={Melrose, Richard},
   title={The Poisson summation formula for manifolds with boundary},
   journal={Adv. in Math.},
   volume={32},
   date={1979},
   number={3},
   pages={204--232},
   }

\bib{g2}{article}{
	author={Gutkin, Eugene},
	title={Billiards in polygons: survey of recent results},
	journal={J. Statist. Phys.},
	volume={83},
	date={1996},
	number={1-2},
	pages={7--26},
}

\bib{g1}{article}{
	author={Gutkin, Eugene},
	title={Billiards in polygons},
	journal={Phys. D},
	volume={19},
	date={1986},
	number={3},
	pages={311--333},
}

\bib{hz2}{article}{
	author={Hezari, Hamid},
	author={Zelditch, Steve},
	title={One can hear the shape of ellipses of small eccentricity},
	date={2019},
	number={arXiv:1907.03882},
}

\bib{hz1}{article}{
	author={Hezari, Hamid},
	author={Zelditch, Steve},
	title={$C^\infty$ spectral rigidity of the ellipse},
	journal={Anal. PDE},
	volume={5},
	date={2012},
	number={5},
	pages={1105--1132},
}

\bib{hlrn}{article}{
   author={Hezari, Hamid},
   author={Lu, Zhiqin},
   author={Rowlett, Julie},
   title={The Neumann isospectral problem for trapezoids},
   journal={Ann. Henri Poincar\'e},
   volume={18},
   date={2017},
   number={12},
   pages={3759--3792},
}

\bib{hil}{article}{
   author={Hillairet, Luc},
   title={Contribution of periodic diffractive geodesics},
   journal={J. Funct. Anal.},
   volume={226},
   date={2005},
   number={1},
   pages={48--89},
}


\bib{Hil1}{article}{
	author={Hillairet, Luc},
	title={Formule de trace sur une surface euclidienne \`a singularit\'es
		coniques},
	language={French, with English and French summaries},
	journal={C. R. Math. Acad. Sci. Paris},
	volume={335},
	date={2002},
	number={12},
	pages={1047--1052},
}

\bib{kac}{article}{
	author={Kac, Mark},
	title={Can one hear the shape of a drum?},
	journal={Amer. Math. Monthly},
	volume={73},
	date={1966},
	number={4},
	pages={1--23},
}

\bib{katok}{article}{
	author={Katok, A.},
	title={The growth rate for the number of singular and periodic orbits for
		a polygonal billiard},
	journal={Comm. Math. Phys.},
	volume={111},
	date={1987},
	number={1},
	pages={151--160},
}

\bib{keller}{article}{author={Keller, J. B.}, title={ A geometrical
theory of diffraction},
journal={Calculus of variations and its applications.
Proceedings of Symposia in Applied
Mathematics}, volume={8}, year={1958}, pages={27--52},}

\bib{sos}{article}{
   author={Lu, Zhiqin},
   author={Rowlett, Julie},
   title={The sound of symmetry},
   journal={Amer. Math. Monthly},
   volume={122},
   date={2015},
   number={9},
   pages={815--835},
}

\bib{mm}{article}{
	author={Marvizi, Shahla},
	author={Melrose, Richard},
	title={Spectral invariants of convex planar regions},
	journal={J. Differential Geom.},
	volume={17},
	date={1982},
	number={3},
	pages={475--502},
}

\bib{ms}{article}{
   author={McKean, H. P., Jr.},
   author={Singer, I. M.},
   title={Curvature and the eigenvalues of the Laplacian},
   journal={J. Differential Geometry},
   volume={1},
   date={1967},
   number={1},
   pages={43--69},
}




\bib{PeSt}{book}{
   author={Petkov, Vesselin M.},
   author={Stoyanov, Luchezar N.},
   title={Geometry of reflecting rays and inverse spectral problems},
   series={Pure and Applied Mathematics (New York)},
   publisher={John Wiley \& Sons, Ltd., Chichester},
   date={1992},
   pages={vi+313},
}


\bib{pt}{article} {author={ Popov, G.}, author={Topalov, P.}, title={From K.A.M. tori to isospectral invariants and spectral rigidity of billiard tables}, journal={arXiv: 1602.03155.v2}, date={2019}, }

\bib{schwartz}{article}{
   author={Schwartz, Richard Evan},
   title={Obtuse triangular billiards. I. Near the $(2,3,6)$ triangle},
   journal={Experiment. Math.},
   volume={15},
   date={2006},
   number={2},
   pages={161--182},
}

\bib{schwartz2}{article}{
   author={Schwartz, Richard Evan},
   title={Obtuse triangular billiards. II. One hundred degrees worth of
   periodic trajectories},
   journal={Experiment. Math.},
   volume={18},
   date={2009},
   number={2},
   pages={137--171},
}

\bib{vig}{article}{
   author={Vign\'eras, Marie-France},
   title={Exemples de sous-groupes discrets non conjugu\'es de ${\rm
   PSL}(2,{\bf R})$ qui ont m\^eme fonction z\'eta de Selberg},
   language={French, with English summary},
   journal={C. R. Acad. Sci. Paris S\'er. A-B},
   volume={287},
   date={1978},
   number={2},
   pages={A47--A49},
}
	
	\bib{vgs}{article}{
		author={Vorobets, Ya. B.},
		author={Gal\cprime perin, G. A.},
		author={St\"epin, A. M.},
		title={Periodic billiard trajectories in polygons: generation mechanisms},
		language={Russian, with Russian summary},
		journal={Uspekhi Mat. Nauk},
		volume={47},
		date={1992},
		number={3(285)},
		pages={9--74, 207},
}

\bib{wat1}{article}{
   author={Watanabe, Kohtaro},
   title={Plane domains which are spectrally determined},
   journal={Ann. Global Anal. Geom.},
   volume={18},
   date={2000},
   number={5},
   pages={447--475},
}

\bib{wat2}{article}{
	author={Watanabe, Kohtaro},
	title={Plane domains which are spectrally determined. II},
	journal={J. Inequal. Appl.},
	volume={7},
	date={2002},
	number={1},
	pages={25--47},
}

\bib{Wu}{article}{
   author={Wunsch, Jared},
   title={A Poisson relation for conic manifolds},
   journal={Math. Res. Lett.},
   volume={9},
   date={2002},
   number={5-6},
   pages={813--828},
}

\bib{zelsurvey}{article}{
	author={Zelditch, Steve},
	title={Survey on the inverse spectral problem},
	journal={ICCM Not.},
	volume={2},
	date={2014},
	number={2},
	pages={1--20},
	
}

\bib{zel}{article}{
	author={Zelditch, Steve},
	title={Inverse spectral problem for analytic domains. II. $\Bbb
		Z_2$-symmetric domains},
	journal={Ann. of Math. (2)},
	volume={170},
	date={2009},
	number={1},
	pages={205--269},
}

\end{biblist}
\end{bibdiv}
\end{document}